\numberwithin{equation}{section}
\newcommand{\ds}{\displaystyle}
\renewcommand{\sp}{\mathrm{sp}}
\DeclareMathOperator{\sgn}{sgn}
\newtheorem{thm}{Theorem}[section]
\newtheorem{cor}[thm]{Corollary}
\newtheorem{lem}[thm]{Lemma}
\crefname{thm}{Theorem}{Theorems}
\newcommand{\z}{\bar{z}}
\newcommand{\q}{\bar{q}}
\newcommand{\sighhh}{\widehat{\sigma}}
\newcommand{\Pf}{\mathop{\mathrm{Pf}}}
\renewcommand{\t}{\bar{t}}
\renewcommand{\u}{\bar{u}}
\newcommand{\Bulki}{\raisebox{-1.2mm}{\begin{tikzpicture}[scale=0.60]
\draw (-0.5,2) -- (-0.5,2.5);
\draw (-0.5,2) -- (0,2);
\draw (-0.5,2) -- (-1,2);
\draw (-0.5,2) -- (-0.5,1.5);
\filldraw[black] (-0.5,2) circle (2pt);
\draw [-latex](-0.5,2)--(-0.5,2.50);
\draw [-latex](-0.5,1.50)--(-0.5,1.90);
\draw [-latex](-0.5,2)--(-0,2);
\draw [-latex](-1,2)--(-0.60,2);
\end{tikzpicture}}}
\newcommand{\Bulkii}{\raisebox{-1.2mm}{\begin{tikzpicture}[scale=0.60]
\draw (-0.5,2) -- (-0.5,2.5);
\draw (-0.5,2) -- (0,2);
\draw (-0.5,2) -- (-1,2);
\draw (-0.5,2) -- (-0.5,1.5);
\filldraw[black] (-0.5,2) circle (2pt);
\draw [-latex](-0.5,2.5)--(-0.5,2.1);
\draw [-latex](-0.5,2)--(-0.5,1.50);
\draw [-latex](0,2)--(-0.4,2);
\draw [-latex](-0.5,2)--(-1,2);
\end{tikzpicture}}}
\newcommand{\Bulkiii}{\raisebox{-1.2mm}{\begin{tikzpicture}[scale=0.60]
\draw (-0.5,2) -- (-0.5,2.5);
\draw (-0.5,2) -- (0,2);
\draw (-0.5,2) -- (-1,2);
\draw (-0.5,2) -- (-0.5,1.5);
\filldraw[black] (-0.5,2) circle (2pt);
\draw [-latex](-0.5,2)--(-0.5,2.50);
\draw [-latex](-0.5,1.5)--(-0.5,1.9);
\draw [-latex](0,2)--(-0.4,2);
\draw [-latex](-0.5,2)--(-1,2);
\end{tikzpicture}}}
\newcommand{\Bulkiv}{\raisebox{-1.2mm}{\begin{tikzpicture}[scale=0.60]
\draw (-0.5,2) -- (-0.5,2.5);
\draw (-0.5,2) -- (0,2);
\draw (-0.5,2) -- (-1,2);
\draw (-0.5,2) -- (-0.5,1.5);
\filldraw[black] (-0.5,2) circle (2pt);
\draw [-latex](-0.5,2.5)--(-0.5,2.1);
\draw [-latex](-0.5,2)--(-0.5,1.50);
\draw [-latex](-0.5,2)--(-0,2);
\draw [-latex](-1,2)--(-0.60,2);
\end{tikzpicture}}}
\newcommand{\Bulkv}{\raisebox{-1.2mm}{\begin{tikzpicture}[scale=0.60]
\draw (-0.5,2) -- (-0.5,2.5);
\draw (-0.5,2) -- (0,2);
\draw (-0.5,2) -- (-1,2);
\draw (-0.5,2) -- (-0.5,1.5);
\filldraw[black] (-0.5,2) circle (2pt);
\draw [-latex](-0.5,2)--(-0.5,2.50);
\draw [-latex](-0.5,2)--(-0.5,1.50);
\draw [-latex](0,2)--(-0.40,2);
\draw [-latex](-1,2)--(-0.60,2);
\end{tikzpicture}}}
\newcommand{\Bulkvi}{\raisebox{-1.2mm}{\begin{tikzpicture}[scale=0.60]
\draw (-0.5,2) -- (-0.5,2.5);
\draw (-0.5,2) -- (0,2);
\draw (-0.5,2) -- (-1,2);
\draw (-0.5,2) -- (-0.5,1.5);
\filldraw[black] (-0.5,2) circle (2pt);
\draw [-latex](-0.5,2.5)--(-0.5,2.1);
\draw [-latex](-0.5,1.5)--(-0.5,1.9);
\draw [-latex](-0.5,2)--(-0,2);
\draw [-latex](-0.5,2)--(-1,2);
\end{tikzpicture}}}
\newcommand{\Lftout}{\raisebox{0.1mm}{\begin{tikzpicture}[scale=0.60]
\draw (-0.5,2) -- (-0.5,2.5);
\draw (-0.5,2) -- (0,2);
\filldraw[black] (-0.5,2) circle (2pt);
\draw [-latex](-0.5,2)--(-0.5,2.50);
\draw [-latex](-0.5,2)--(0,2);
\end{tikzpicture}}}
\newcommand{\Lftin}{\raisebox{0.1mm}{\begin{tikzpicture}[scale=0.60]
\draw (-0.5,2) -- (-0.5,2.5);
\draw (-0.5,2) -- (0,2);
\filldraw[black] (-0.5,2) circle (2pt);
\draw [-latex](-0.5,2.5)--(-0.5,2.1);
\draw [-latex](0,2)--(-0.4,2);
\end{tikzpicture}}}
\newcommand{\Lftup}{\raisebox{0.1mm}{\begin{tikzpicture}[scale=0.60]
\draw (-0.5,2) -- (-0.5,2.5);
\draw (-0.5,2) -- (0,2);
\filldraw[black] (-0.5,2) circle (2pt);
\draw [-latex](-0.5,2)--(-0.5,2.50);
\draw [-latex](0,2)--(-0.4,2);
\end{tikzpicture}}}
\newcommand{\Lftdown}{\raisebox{0.1mm}{\begin{tikzpicture}[scale=0.60]
\draw (-0.5,2) -- (-0.5,2.5);
\draw (-0.5,2) -- (0,2);
\filldraw[black] (-0.5,2) circle (2pt);
\draw [-latex](-0.5,2.5)--(-0.5,2.1);
\draw [-latex](-0.5,2)--(-0,2);
\end{tikzpicture}}}
\newcommand{\Rt}{\raisebox{0.5mm}{
\begin{tikzpicture}[scale=0.60]
    \draw (-0.5,2) -- (-1,2);
    \filldraw[black] (-0.5,2) circle (2pt);
    \draw [-latex](-0.5,2)--(-1,2);
\end{tikzpicture}
}}
\newcommand{\K}{\raisebox{0.25mm}{
\begin{tikzpicture}[scale=0.60]
    \draw (-0.5,2) -- (-0.5,2.5);
    \filldraw[black] (-0.5,2.5) circle (2pt);
    \draw [-latex](-0.5,2)--(-0.5,2.45);
\end{tikzpicture}
}}
\title{Off-diagonally symmetric alternating sign matrices}
\author{Nishu Kumari}
\address{Nishu Kumari, Department of Mathematics, 
University of Vienna, Austria.}
\email{nishukumari@alum.iisc.ac.in}
\date{\today}
\begin{document}
\begin{abstract}
A diagonally symmetric alternating sign matrix (DSASM) is a symmetric matrix with entries $-1$, $0$ and $1$, where the nonzero entries alternate in sign along each row and column, and the sum of the entries in each row and column equals $1$. An off-diagonally symmetric alternating sign matrix (OSASM) is a DSASM, where the number of nonzero diagonal entries is 0 for even-order matrices and 1 for odd-order matrices. Kuperberg (Ann. Math., 2002) studied even-order OSASMs and and derived a product formula for counting the number of OSASMs of any fixed even order. In this work, we provide a product formula for the number of odd-order OSASMs of any fixed order. Additionally, we present an algebraic proof of a symmetry property for even-order OSASMs. This resolves all the three conjectures of Behrend, Fischer, and Koutschan (arXiv, 2023) regarding the exact enumeration of OSASMs.
\end{abstract}
\subjclass[2020]{05E10, 05E05, 05B20, 15B35}
\keywords{Alternating sign matrices, product formula}
\maketitle
\section{Introduction}
\label{sec:intro}
An {alternating sign matrix} (ASM) is a square matrix that meets the following criteria:
\begin{itemize}
    \item each entry is either $1$, $0$ or $-1$,
    \item the nonzero entries alternate in sign along each row and column,
    \item the sum of the entries in each row and column is 1.
\end{itemize}

We use ASM$(n)$ to represent the set of ASMs of order $n$, for any integer $n \geq 1$. Mills, Robbins and Rumsey~\cite{mills1982proof, mills1983alternating} introduced the concept of ASMs and conjectured that the number of ASMs of order $n$ is given by 
\[
|\text{ASM}(n)|=\prod_{i=0}^{n-1} \frac{(3i+1)!}{(n+i)!}.
\]

The conjecture was first resolved by Zeilberger~\cite{zeilberger1994proof}, with subsequent proofs provided by Kuperberg~\cite{kuperberg1996another}, Fischer~\cite{fischer2006number, fischer2007new, fischer2016short}, and Fischer and Konvalinka~\cite{fischer2019bijective, fischer2020mysterious, fischer2022bijective}. 
For a history of ASMs, see~\cite{bressoud1999proofs}.

The group of symmetries of a square, known as the dihedral group $D_4$ of order 8, acts naturally on the set of ASMs. Exact formulas for the number of ASMs invariant under the action of various subgroups of $D_4$
  have been derived in a series of papers~\cite{andrews1994plane, kuperberg2002symmetry, robbins2000symmetry, okada2006enumeration, behrend2017diagonally, mills1986self, razumov2006enumeration, razumov2006enumerations}. Since the number of invariant ASMs is the same for any two conjugate subgroups, this results in eight distinct symmetry classes of ASMs. These classes are as follows: unrestricted ASMs, vertically (or horizontally) symmetric ASMs, vertically and horizontally symmetric ASMs, half-turn symmetric ASMs, quarter-turn symmetric ASMs, diagonally (or antidiagonally) symmetric ASMs, diagonally and antidiagonally symmetric ASMs, and totally symmetric ASMs. For a review of the general approach to obtaining enumeration formulas for various symmetry classes of ASMs, see~\cite[Section 2]{behrend2023diagonally}.

We focus on off-diagonally symmetric ASMs, a subclass within the symmetry class of diagonally symmetric ASMs. A {diagonally symmetric ASM} (DSASM) is an ASM that remains invariant under matrix transposition. Recently, Behrend, Fischer, and Koutschan~\cite{behrend2023diagonally} established a Pfaffian formula for the number of DSASMs of any fixed order.   
An {off-diagonally symmetric ASM} (OSASM), as defined by Kuperberg~\cite{kuperberg2002symmetry}, is a DSASM in which all diagonal entries are zero. We use OSASM$(n)$ to represent the set of OSASMs of order $n$. 
Kuperberg~\cite[Theorem 2]{kuperberg2002symmetry} demonstrated that the number of OSASMs of order $2n$ is given by:
\[
|\text{OSASM}(2n)|=\prod_{i=1}^{n} \frac{(6i-2)!}{(2n+2i)!}.
\]
Observe that it is not possible for each diagonal entry of an odd-order DSASM to be zero. If all diagonal entries of a DSASM of order $n$ are zero, then the sum of all entries equals twice the sum of all strictly upper triangular entries. Since the sum of all entries is equal to $n$, it follows that $n$ must be even. Consequently, define an {odd-order OSASM} as an odd-order DSASM in which exactly one diagonal entry is nonzero. Odd-order OSASMs, to our knowledge, are first considered by Behrend, Fischer and Koutschan~\cite{behrend2023diagonally}. They also conjecture~\cite[Conjecture 15]{behrend2023diagonally} that the number of $(2n+1) \times (2n+1)$ OSASMs is given by
\begin{equation}
\label{OSams-odd}
|\text{OSASM}(2n+1)|=\frac{2^{n-1} (3n+2)!}{(2n+1)!}\prod_{i=1}^{n} \frac{(6i-2)!}{(2n+2i+1)!}.
\end{equation}
We give an algebraic proof of \eqref{OSams-odd} in \cref{sec:product}. To this end, we define the DSASM and OSASM generating functions in \cref{sec:gf} and the DSASM and OSASM partition functions in \cref{sec:pf}. In \cref{sec:product}, we first prove another conjecture~\cite[Conjecture 17]{behrend2023diagonally} (see \cref{odd-osasm}) and then use it to write a proof of \eqref{OSams-odd}. 
Lastly, we prove a symmetry property for even-order OSASMs conjectured by Behrend, Fischer and Koutschan~\cite[Conjecture 16]{behrend2023diagonally} in \cref{sec:symm}. Combinatorial proofs for these conjectures are deferred to a forthcoming paper, written jointly with Fischer~\cite{ilse-2025}.

\section{The DSASM and OSASM generating functions}
\label{sec:gf}
In this section, we review the definitions of the DSASM and OSASM generating functions considered in ~\cite{behrend2023diagonally}.

Throughout the paper, for an indeterminate $x$, a ring $R$ and an expression $F \in R[x,x^{-1}]$, $F\big|_{x=a}$ is the algebraic expression obtained by substituting
$x=a$ in $F$. It is being used in \eqref{XOdef} for the first time.

As noted in \cref{sec:intro}, a DSASM
is an ASM that remains invariant under matrix transposition. We denote the set of DSASMs of order $n$ by DSASM$(n)$. For examples, DSASM$(1)$, DSASM$(2)$ and DSASM$(3)$ are
\[
\left\{
\left(
\begin{array}{c}
1
\end{array}
\right)
\right\},
\qquad 
\left\{
\left(
\begin{array}{cc}
    1 & 0 \\
    0 & 1
\end{array}
\right),
\left(
\begin{array}{cc}
    0 & 1 \\
    1 & 0
\end{array}
\right)
\right\}
\]
and
\[
\left
\{
\left(
\begin{array}{ccc}
1&0&0\\
0&1&0\\
0&0&1
\end{array} \right),
\left( 
\begin{array}{ccc}
1&0&0\\
0&0&1\\
0&1&0
\end{array} 
\right),
\left( \begin{array}{ccc}
0&1&0\\
1&0&0\\
0&0&1
\end{array} \right),
\left( \begin{array}{ccc}
0&0&1\\
0&1&0\\
1&0&0
\end{array} 
\right),
\left( 
\begin{array}{ccc}
0&1&0\\
1&-1&1\\0&1&0
\end{array}
\right)\right\}
\]
respectively.

It is straightforward to observe that each of the first and last rows and columns of any ASM contains exactly one 1, with all other entries being 0.
Consider the following three statistics for a DSASM 
$A=(A_{i,j})_{1 \leq i,j \leq n}$, labeled 
$R(A)$, $S(A)$ and $T(A)$:
% of order $n$.
\begin{equation}
\label{defnRST}
\begin{split}
    R(A)=&\text{ the number of nonzero entries in the strictly upper triangular part of } A\\
    = & \, \sum_{1 \leq i<j \leq n} |A_{i,j}|,\\
   S(A) = & \text{ the number of nonzero entries on the diagonal of } A\\
    = & \,\sum_{i=1}^n |A_{i,i}|,\\
    T(A)=&\text{ the column index of the 1 in the first row of }A.
    \end{split}
\end{equation}
The {DSASM generating function} $X_n(r,s,t)$ is defined as 
\begin{equation}
\label{gf}
X_n(r,s,t)=\sum_{A \, \in \, \text{DSASM}(n)} r^{R(A)} s^{S(A)}
t^{T(A)}, \end{equation}
where $r,s$ and $t$ are indeterminates. For examples, 
\begin{align*}
    X_1(r,s,t)=\,st,
    X_2(r,s,t)=\,s^2t+r t^2,
    X_3(r,s,t)=\,s^3t+rst+rst^2+rst^3+r^2st^2.
\end{align*}

We refer the reader to~\cite[Section 4]{behrend2023diagonally} for Pfaffian expressions of $X_n(r,s,t)$. 

As mentioned in \cref{sec:intro}, an OSASM of even order is an even-order DSASM where all diagonal entries are zero, while an OSASM of odd order is an odd-order DSASM with exactly one nonzero diagonal entry. We denote the set of OSASMs of order $n$ by OSASM$(n)$.
For examples,  OSASM$(1)$, OSASM$(2)$ and OSASM$(3)$ are
\[
\left\{
\left(
\begin{array}{c}
1
\end{array}
\right)
\right\},
\qquad 
\left\{
\left(
\begin{array}{cc}
    0 & 1 \\
    1 & 0
\end{array}
\right)
\right\}
\]
and
\[
\left
\{
\left( \begin{array}{ccc}
1&0&0\\0&0&1\\0&1&0\end{array} \right),\left( \begin{array}{ccc}
0&1&0\\1&0&0\\0&0&1\end{array} \right),
\left( \begin{array}{ccc}
0&0&1\\0&1&0\\1&0&0\end{array} 
\right),\left( \begin{array}{ccc}
0&1&0\\1&-1&1\\0&1&0\end{array}
\right)\right\}
\]
respectively. Define the {OSASM generating function} $X_n^{\mathrm{O}}(r,t)$ as 
\[
X_n^{\mathrm{O}}(r,t)=\sum_{A \, \in \, \text{OSASM}(n)}r^{R(A)} 
t^{T(A)}, 
\]
for indeterminates $r$ and $t$. For examples,
\begin{align*}
    X_1^{\mathrm{O}}(r,t)=\,t,
    X_2^{\mathrm{O}}(r,t)=\,r t^2,
    X_3^{\mathrm{O}}(r,t)=\,rt+rt^2+rt^3+r^2t^2.
\end{align*}
Note that
\begin{equation}
\label{XOdef}
X_n^{\mathrm{O}}(r,t)= \begin{cases}
   X_n(r,0,t) & n \text{ is even},\\
  \left(X_n(r,s,t)/s\right)\big|_{s=0} & n \text{ is odd}.
\end{cases}
\end{equation}
Now we recall the following product formula derived by Behrend, Fischer and Koutschan~\cite[Equation (128)]{behrend2023diagonally}  for $X_{2n}^{\mathrm{O}}(1,-1)$ using a generating function identity of $X_{2n}^{\mathrm{O}}(1,t)$ given by Razumov and Stroganov~\cite{razumov2004refined}. 
\begin{equation}
\label{XO}
X_{2n}^{\mathrm{O}}(1,-1)= \frac{(3n-1)!}{2^n (2n-1)!}
\prod_{i=1}^{n-1} \frac{(6i-2)!}{(2n+2i-1)!}.
\end{equation}
\section{The DSASM and OSASM partition functions}
\label{sec:pf}
In this section, we revisit the definitions of the DSASM and OSASM partition functions as presented in~\cite[Section 5]{behrend2023diagonally}. 

Throughout the paper, we use the following notation. Define $\bar{x}=1/x$, $\sigma(x)=x-\bar{x}$, and $\sighhh(x)=\sigma(x)/\sigma(q^4)$, where $x$ is an indeterminate and $q$ is an arbitrary constant. 
For a positive integer $m$, define
\[
\chi_{\mathrm{even}}(m)=\begin{cases}
    1 & m \text{ is even,}\\
    0 & \text{otherwise.}
\end{cases}
\]
For a skew-symmetric matrix $M=\left(M_{i,j}\right)_{1 \leq i,j \leq 2m}$, the {Pfaffian} of $M$ is defined as 
\[
\Pf(M)=\sum_{\left\{\{i_1,j_1\},\dots,\{i_m,j_m\}\right\}}
\sgn(\sigma) M_{i_1,j_1} \cdots M_{i_m,j_m},
\]
where the sum is over all partitions $\left\{\{i_1,j_1\},\dots,\{i_m,j_m\}\right\}$ of $\{1,\dots,2m\}$
into two-element subsets such that $i_k<j_k$, $k=1,\dots,m$ and $i_1<i_2<\dots<i_m$, and $\sigma$ is the permutation that maps $1$ to $i_1$, $2$ to $j_1$, $\dots$, $2m-1$ to $i_m$ and $2m$ to $j_m$. 
For a skew-symmetric matrix $M$, \begin{equation}
\Pf(M)^2=\det(M).
\end{equation}
For a triangular array $B=\left(B_{i,j}\right)_{1 \leq i<j \leq 2b}$, the Pfaffian is the Pfaffian of the skew-symmetric matrix whose strictly upper triangular part is $B$. For any $k_1,\dots,k_{2b}$,
\begin{equation}
\label{out}
\Pf_{1 \leq i<j \leq 2b} \left( k_i k_j B_{i,j}\right)= \prod_{i=1}^{2b} k_i \Pf_{1 \leq i<j \leq 2b} \left(B_{i,j}\right).
\end{equation}

\vspace{0.15in}

Behrend, Fischer and Koutschan~\cite{behrend2023diagonally} introduced a vertex model whose configurations correspond bijectively to DSASMs of any fixed order. 
The configurations of the vertex model are defined on a {grid graph}
$T_n$, which consists of the following vertices: 
top vertices at $(0,j)$, for $j=1,\dots,n$, left boundary vertices
at $(i,i)$, for $i=1,\dots,n$, 
bulk vertices at $(i,j)$, for $1\le i<j\le n$, 
and right boundary vertices 
at $(i,n+1)$, for $i=1,\dots,n$ (see \cref{fig:1}).

\begin{center}
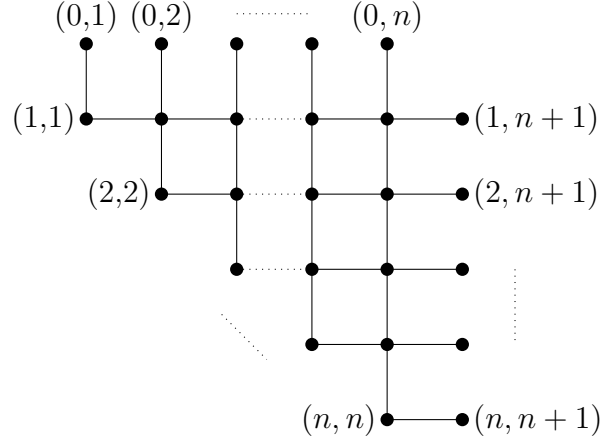
\begin{figure}[htbp]
\begin{tikzpicture}
 \node[above] at (-1,-7) {(0,1)};
 \node[above] at (0,-7) {(0,2)};
  \node[above] at (3,-7) {$(0,n)$};
  \node[left] at (-1,-8) {(1,1)};
  \node[left] at (0,-9) {(2,2)};
  \node[left] at (3,-12) {$(n,n)$};
  \node[right] at (4,-8) {$(1,n+1)$};
  \node[right] at (4,-9) {$(2,n+1)$};
  \node[right] at (4,-12) {$(n,n+1)$};
  \draw  node[fill,circle,inner sep=0pt,minimum size=5pt] at (-1,-7){}; 
  \draw  node[fill,circle,inner sep=0pt,minimum size=5pt] at (0,-7){}; 
  \draw  node[fill,circle,inner sep=0pt,minimum size=5pt] at (1,-7){}; 
  \draw  node[fill,circle,inner sep=0pt,minimum size=5pt] at (2,-7){}; 
  \draw  node[fill,circle,inner sep=0pt,minimum size=5pt] at (3,-7){}; 
   \draw  node[fill,circle,inner sep=0pt,minimum size=5pt] at (-1,-8){};
    \draw  node[fill,circle,inner sep=0pt,minimum size=5pt] at (0,-8){};
     \draw  node[fill,circle,inner sep=0pt,minimum size=5pt] at (1,-8){};
      \draw  node[fill,circle,inner sep=0pt,minimum size=5pt] at (2,-8){};
      \draw  node[fill,circle,inner sep=0pt,minimum size=5pt] at (3,-8){};
       \draw  node[fill,circle,inner sep=0pt,minimum size=5pt] at (4,-8){};
       \draw  node[fill,circle,inner sep=0pt,minimum size=5pt] at (0,-9){};
       \draw  node[fill,circle,inner sep=0pt,minimum size=5pt] at (1,-9){};
       \draw  node[fill,circle,inner sep=0pt,minimum size=5pt] at (2,-9){};
       \draw  node[fill,circle,inner sep=0pt,minimum size=5pt] at (3,-9){};
       \draw  node[fill,circle,inner sep=0pt,minimum size=5pt] at (4,-9){};
        \draw  node[fill,circle,inner sep=0pt,minimum size=5pt] at (1,-10){};
       \draw  node[fill,circle,inner sep=0pt,minimum size=5pt] at (2,-10){};
       \draw  node[fill,circle,inner sep=0pt,minimum size=5pt] at (3,-10){};
       \draw  node[fill,circle,inner sep=0pt,minimum size=5pt] at (4,-10){};
       \draw  node[fill,circle,inner sep=0pt,minimum size=5pt] at (2,-11){};
       \draw  node[fill,circle,inner sep=0pt,minimum size=5pt] at (3,-11){};
       \draw  node[fill,circle,inner sep=0pt,minimum size=5pt] at (4,-11){};
       \draw  node[fill,circle,inner sep=0pt,minimum size=5pt] at (3,-12){};
       \draw  node[fill,circle,inner sep=0pt,minimum size=5pt] at (4,-12){};
\draw[dotted] (1,-6.6)--(2,-6.6);
\draw (-1,-7)--(-1,-8);
 \draw (0,-7)--(0,-8)--(0,-9);
    \draw (1,-7)--(1,-8)--(1,-9)--(1,-10);
    \draw (2,-7)--(2,-8)--(2,-9)--(2,-10)--(2,-11);
    \draw (3,-7)--(3,-8)--(3,-9)--(3,-10)--(3,-11)--(3,-12);
    \draw (-1,-8)--(0,-8)--(1,-8);
    \draw[dotted] (1,-8)--(2,-8);
    \draw (2,-8)--(3,-8)--(4,-8);
    \draw (0,-9)--(1,-9);
    \draw[dotted] (1,-9)--(2,-9);
    \draw (2,-9)--(3,-9)--(4,-9);
    \draw[dotted] (1,-10)--(2,-10);
    \draw (2,-10)--(3,-10)--(4,-10);
    \draw (2,-11)--(3,-11)--(4,-11);
    \draw (3,-12)--(4,-12);
    \draw[dotted] (4.7,-10)--(4.7,-11);
    \draw[dotted] (0.8,-10.6)--(1.4,-11.2);
\end{tikzpicture}
\caption{A grid graph $T_n$}
\label{fig:1}
\end{figure}
 \end{center}
 
A {six-vertex configuration} on $T_n$ is defined as an orientation of the edges of $T_n$ that satisfies the following three conditions:
\begin{enumerate}
\item Each bulk vertex has two incoming edges and two outgoing edges.
\item Each edge incident to a top vertex is directed upward.
\item Each edge incident to a right boundary vertex is directed leftward.
\end{enumerate}

A {local configuration} in a configuration $C$ is the orientations of the edges incident to the vertex. 
Thus, the possible local configurations are as follows:
\begin{itemize}
    \item \K\ at the top vertices, 
    \item \Rt\ at the right boundary vertices,
    \item \Lftup, \Lftdown, \Lftout\ or \Lftin\ at the left boundary vertices,
    \item \Bulkv, \, \Bulkvi, \, \Bulki, \, \Bulkii,\, \Bulkiii\ or  \Bulkiv \ at the bulk vertices. 
\end{itemize}

% \noindent
We denote the set of configurations on $T_n$ by $\mathrm{6V}(n)$. For examples,
% \psset{unit=5mm}
\[
\mathrm{6V}(1)=
\left\{
\raisebox{-2mm}{
\begin{tikzpicture}[scale=0.50]
\draw (-1,2) -- (-1,3);
\draw (-1,2)-- (0,2);
\draw [-latex](-1,2)--(-1,2.7);
\filldraw[black] (-1,2) circle (2pt);
\filldraw[black] (-1,3) circle (2pt);
\filldraw[black] (0,2) circle (2pt);
\draw [-latex](0,2)--(-0.7,2);
\end{tikzpicture}
}
\right\},\quad
\mathrm{6V}(2)=\left\{\raisebox{-4.5mm}{
\begin{tikzpicture}[scale=0.50]
\draw (-1,2) -- (-1,3);
\draw (-1,2)-- (0,2)--(1,2);
\draw (0,3)--(0,2)--(0,1);
\draw (0,1)--(1,1);
\draw [-latex](-1,2)--(-1,2.7);
\draw [-latex](0,2)--(0,2.7);
\draw [-latex](0,1)--(0,1.7);
\filldraw[black] (-1,2) circle (2pt);
\filldraw[black] (-1,3) circle (2pt);
\filldraw[black] (0,2) circle (2pt);
\filldraw[black] (0,3) circle (2pt);
\filldraw[black] (1,2) circle (2pt);
\filldraw[black] (0,1) circle (2pt);
\filldraw[black] (1,1) circle (2pt);
\draw [-latex](0,2)--(-0.7,2);
\draw [-latex](1,2)--(0.3,2);
\draw [-latex](1,1)--(0.3,1);
\end{tikzpicture}{\raisebox{4mm}{, }}
\begin{tikzpicture}[scale=0.50]
\draw (-1,2) -- (-1,3);
\draw (-1,2)-- (0,2)--(1,2);
\draw (0,3)--(0,2)--(0,1);
\draw (0,1)--(1,1);
\draw [-latex](-1,2)--(-1,2.7);
\draw [-latex](0,2)--(0,2.7);
\draw [-latex](0,2)--(0,1.3);
\filldraw[black] (-1,2) circle (2pt);
\filldraw[black] (-1,3) circle (2pt);
\filldraw[black] (0,2) circle (2pt);
\filldraw[black] (0,3) circle (2pt);
\filldraw[black] (1,2) circle (2pt);
\filldraw[black] (0,1) circle (2pt);
\filldraw[black] (1,1) circle (2pt);
\draw [-latex](-1,2)--(-0.3,2);
\draw [-latex](1,2)--(0.3,2);
\draw [-latex](1,1)--(0.3,1);
\end{tikzpicture}
}
\right\},
\]
\[\mathrm{6V}(3)=\left\{\raisebox{-6.7mm}{
\begin{tikzpicture}[scale=0.50]
\draw (-1,2) -- (-1,3);
\draw (-1,2)-- (0,2)--(1,2)--(2,2);
\draw (0,3)--(0,2)--(0,1);
\draw (1,3)--(1,2)--(1,1)--(1,0);
\draw (0,1)--(1,1)--(2,1);
\draw (1,0)--(2,0);
\draw [-latex](-1,2)--(-1,2.7);
\draw [-latex](0,2)--(0,2.7);
\draw [-latex](1,2)--(1,2.7);
\draw [-latex](0,1)--(0,1.7);
\draw [-latex](1,1)--(1,1.7);
\draw [-latex](1,0)--(1,0.7);
\filldraw[black] (-1,2) circle (2pt);
\filldraw[black] (-1,3) circle (2pt);
\filldraw[black] (0,2) circle (2pt);
\filldraw[black] (0,3) circle (2pt);
\filldraw[black] (1,2) circle (2pt);
\filldraw[black] (0,1) circle (2pt);
\filldraw[black] (1,1) circle (2pt);
\filldraw[black] (1,3) circle (2pt);
\filldraw[black] (2,2) circle (2pt);
\filldraw[black] (2,1) circle (2pt);
\filldraw[black] (2,0) circle (2pt);
\filldraw[black] (1,0) circle (2pt);
\draw [-latex](2,2)--(1.3,2);
\draw [-latex](2,1)--(1.3,1);
\draw [-latex](2,0)--(1.3,0);
\draw [-latex](0,2)--(-0.7,2);
\draw [-latex](1,2)--(0.3,2);
\draw [-latex](1,1)--(0.3,1);
\end{tikzpicture}{\raisebox{9mm}{, }}
\begin{tikzpicture}[scale=0.50]
\draw (-1,2) -- (-1,3);
\draw (-1,2)-- (0,2)--(1,2)--(2,2);
\draw (0,3)--(0,2)--(0,1);
\draw (1,3)--(1,2)--(1,1)--(1,0);
\draw (0,1)--(1,1)--(2,1);
\draw (1,0)--(2,0);
\draw [-latex](-1,2)--(-1,2.7);
\draw [-latex](0,2)--(0,2.7);
\draw [-latex](1,2)--(1,2.7);
\draw [-latex](0,1)--(0,1.7);
\draw [-latex](1,1)--(1,1.7);
\draw [-latex](1,1)--(1,0.3);
\filldraw[black] (-1,2) circle (2pt);
\filldraw[black] (-1,3) circle (2pt);
\filldraw[black] (0,2) circle (2pt);
\filldraw[black] (0,3) circle (2pt);
\filldraw[black] (1,2) circle (2pt);
\filldraw[black] (0,1) circle (2pt);
\filldraw[black] (1,1) circle (2pt);
\filldraw[black] (1,3) circle (2pt);
\filldraw[black] (2,2) circle (2pt);
\filldraw[black] (2,1) circle (2pt);
\filldraw[black] (2,0) circle (2pt);
\filldraw[black] (1,0) circle (2pt);
\draw [-latex](2,2)--(1.3,2);
\draw [-latex](2,1)--(1.3,1);
\draw [-latex](2,0)--(1.3,0);
\draw [-latex](0,2)--(-0.7,2);
\draw [-latex](1,2)--(0.3,2);
\draw [-latex](0,1)--(0.7,1);
\end{tikzpicture}{\raisebox{9mm}{, }}
\begin{tikzpicture}[scale=0.50]
\draw (-1,2) -- (-1,3);
\draw (-1,2)-- (0,2)--(1,2)--(2,2);
\draw (0,3)--(0,2)--(0,1);
\draw (1,3)--(1,2)--(1,1)--(1,0);
\draw (0,1)--(1,1)--(2,1);
\draw (1,0)--(2,0);
\draw [-latex](-1,2)--(-1,2.7);
\draw [-latex](0,2)--(0,2.7);
\draw [-latex](1,2)--(1,2.7);
\draw [-latex](0,2)--(0,1.3);
\draw [-latex](1,1)--(1,1.7);
\draw [-latex](1,0)--(1,0.7);
\filldraw[black] (-1,2) circle (2pt);
\filldraw[black] (-1,3) circle (2pt);
\filldraw[black] (0,2) circle (2pt);
\filldraw[black] (0,3) circle (2pt);
\filldraw[black] (1,2) circle (2pt);
\filldraw[black] (0,1) circle (2pt);
\filldraw[black] (1,1) circle (2pt);
\filldraw[black] (1,3) circle (2pt);
\filldraw[black] (2,2) circle (2pt);
\filldraw[black] (2,1) circle (2pt);
\filldraw[black] (2,0) circle (2pt);
\filldraw[black] (1,0) circle (2pt);
\draw [-latex](2,2)--(1.3,2);
\draw [-latex](2,1)--(1.3,1);
\draw [-latex](2,0)--(1.3,0);
\draw [-latex](-1,2)--(-0.3,2);
\draw [-latex](1,2)--(0.3,2);
\draw [-latex](1,1)--(0.3,1);
\end{tikzpicture}{\raisebox{9mm}{, }}
\begin{tikzpicture}[scale=0.50]
\draw (-1,2) -- (-1,3);
\draw (-1,2)-- (0,2)--(1,2)--(2,2);
\draw (0,3)--(0,2)--(0,1);
\draw (1,3)--(1,2)--(1,1)--(1,0);
\draw (0,1)--(1,1)--(2,1);
\draw (1,0)--(2,0);
\draw [-latex](-1,2)--(-1,2.7);
\draw [-latex](0,2)--(0,2.7);
\draw [-latex](1,2)--(1,2.7);
\draw [-latex](0,1)--(0,1.7);
\draw [-latex](1,2)--(1,1.3);
\draw [-latex](1,1)--(1,0.3);
\filldraw[black] (-1,2) circle (2pt);
\filldraw[black] (-1,3) circle (2pt);
\filldraw[black] (0,2) circle (2pt);
\filldraw[black] (0,3) circle (2pt);
\filldraw[black] (1,2) circle (2pt);
\filldraw[black] (0,1) circle (2pt);
\filldraw[black] (1,1) circle (2pt);
\filldraw[black] (1,3) circle (2pt);
\filldraw[black] (2,2) circle (2pt);
\filldraw[black] (2,1) circle (2pt);
\filldraw[black] (2,0) circle (2pt);
\filldraw[black] (1,0) circle (2pt);
\draw [-latex](2,2)--(1.3,2);
\draw [-latex](2,1)--(1.3,1);
\draw [-latex](2,0)--(1.3,0);
\draw [-latex](-1,2)--(-0.3,2);
\draw [-latex](0,2)--(0.7,2);
\draw [-latex](1,1)--(0.3,1);
\end{tikzpicture}{\raisebox{9mm}{, }}
\begin{tikzpicture}[scale=0.50]
\draw (-1,2) -- (-1,3);
\draw (-1,2)-- (0,2)--(1,2)--(2,2);
\draw (0,3)--(0,2)--(0,1);
\draw (1,3)--(1,2)--(1,1)--(1,0);
\draw (0,1)--(1,1)--(2,1);
\draw (1,0)--(2,0);
\draw [-latex](-1,2)--(-1,2.7);
\draw [-latex](0,2)--(0,2.7);
\draw [-latex](1,2)--(1,2.7);
\draw [-latex](0,2)--(0,1.3);
\draw [-latex](1,1)--(1,1.7);
\draw [-latex](1,1)--(1,0.3);
\filldraw[black] (-1,2) circle (2pt);
\filldraw[black] (-1,3) circle (2pt);
\filldraw[black] (0,2) circle (2pt);
\filldraw[black] (0,3) circle (2pt);
\filldraw[black] (1,2) circle (2pt);
\filldraw[black] (0,1) circle (2pt);
\filldraw[black] (1,1) circle (2pt);
\filldraw[black] (1,3) circle (2pt);
\filldraw[black] (2,2) circle (2pt);
\filldraw[black] (2,1) circle (2pt);
\filldraw[black] (2,0) circle (2pt);
\filldraw[black] (1,0) circle (2pt);
\draw [-latex](2,2)--(1.3,2);
\draw [-latex](2,1)--(1.3,1);
\draw [-latex](2,0)--(1.3,0);
\draw [-latex](-1,2)--(-0.3,2);
\draw [-latex](1,2)--(0.3,2);
\draw [-latex](0,1)--(0.7,1);
\end{tikzpicture}
}\right\}.
\]

As mentioned before, the configurations in $\mathrm{6V}(n)$ are in bijective correspondence with the elements in DSASM$(n)$. The bijection that maps a DSASM $A$ of order $n$ to a configuration in $\mathrm{6V}(n)$ is given by 

\begin{equation}
\label{bij}
A_{i,j}=A_{j,i}=
\begin{cases}
-1 & C_{ij}
=\Bulkvi
\text{ or }\Lftdown,
\\
1 &
C_{ij} = \Bulkv \text{ or }
\Lftup,\\
0 & C_{ij} = \Bulki, 
\Bulkii, 
\Bulkiii, \Bulkiv, \Lftout
\text{ or } \Lftin,
\end{cases}
\end{equation}
for $1\le i\le j\le n$, where $C_{ij}$ is the 
local configuration at $(i,j)$.
% There is a well-known 
The bijection in \eqref{bij} is a restriction of the bijection from the set of ASMs to the set of six-vertex model configurations
first observed by Robbins and Rumsey~\cite{robbins1986determinants},
and later revisited by by Elkies, Kuperberg, Larsen and Propp~\cite{elkies1992alternating} in the standard form now widely used. 

Define the {weight of each top and right boundary vertex} is $1$. For the {weight of a bulk vertex $(i,j)$ $($resp. left boundary vertex}$)$, denoted $W(C_{ij},u_iu_j)$ (resp. $W(C_{ii},u_i)$), we refer the reader to \cref{tale}, where $C_{ij}$ (resp. $C_{ii}$) is the local configuration at $(i,j)$ (resp. $(i,i)$), $u_1,\dots,u_n$ are indeterminates and 
$\alpha$, $\beta$, $\gamma$, $\delta$ and $q$ are arbitrary constants (i.e., independent of $u_i$, $1 \leq i \leq n$), and $\phi(u_i)$ is an
arbitrary function of~$u_i$. Different special cases of these weights have been previously employed in the enumeration of certain classes of ASMs, as seen in the works of Kuperberg~\cite{kuperberg2002symmetry}, Behrend, Fischer, and Konvalinka~\cite{behrend2017diagonally}, and Ayyer, Behrend, and Fischer~\cite{ayyer2020extreme}.

\begin{table}[h]
$\begin{array}{|@{\;\;\;\;\;\;}l|@{\;\;\;\;}l|}
\hline\rule{0ex}{3.7ex}
\,\,\,\,\,\,\,\,\text{Bulk weights}
&\quad \,\,\text{Left boundary weights}\\[2.0mm]
\hline\rule{0ex}{3.7ex}
W\left(\Bulkv,u_i u_j\right)=1
&
W\left(\,\Lftup,u_i\right)=
\left(\alpha\,q\,u_i
+\beta\,\q\,\bar{u}_i\right)\,\phi(u_i)\\[3.1mm]
W\left(\Bulkvi,u_i u_j\right)=1 & 
W\left(\,\Lftdown,u_i\right)=
\left(\alpha\,\q\,\bar{u}_i+\beta\,q\,u_i\right)\,\phi(u_i)\\[2.8mm]
W\left(\Bulki,u_i u_j\right)=
\sighhh(q^2u_i u_j)& 
W\left(\,\Lftin,u_i\right)=
\delta\,\sigma(q^2u_i^2)\,\phi(u_i) \\[4.6mm]
W\left(\Bulkii,u_i u_j\right)
=\sighhh(q^2u_i u_j)&
W\left(\,
\Lftout,
u_i\right)
=\gamma \,\sigma(q^2u_i^2)\,\phi(u_i)  \\[4.6mm]
W\left(\Bulkiii,u_i u_j\right)
=\sighhh(q^2\bar{u}_i \bar{u}_j)&\\[4.6mm]
W\left(\Bulkiv,u_i u_j\right)
=\sighhh(q^2\bar{u}_i \bar{u}_j)&\\[5mm]\hline
\end{array}$\\[2.4mm]
\caption{Bulk and left boundary weights.}
\label{tale}
\end{table}

Define the {DSASM partition function} $Z_n(u_1,\dots,u_n)$ for the DSASMs of order $n$ to be the 
sum of weights of all configurations $C$ in $\mathrm{6V}(n)$, where the {weight of a configuration} $C$ is defined to be the product of its vertex weights over all vertices of $T_n$. Explicitly,

\begin{equation}
\label{def-Z}
Z_n(u_1,\dots,u_n)=\sum_{C \, \in \, \mathrm{6V}(n)} \left( \prod_{i=1}^n W(C_{ii},u_i) \prod_{1 \leq i<j \leq n} W(C_{ij},u_iu_j) \right).
\end{equation}

For examples,
\begin{align*}
Z_1(u_1) &=(\alpha qu_1+\beta \bar{q} \bar{u}_1)\,\phi(u_1),\\
Z_2(u_1,u_2)&=\left((\alpha qu_1+\beta\q\bar{u}_1)\,
(\alpha qu_2+\beta\q\bar{u}_2)\,
\sighhh(q^2\bar{u}_1\bar{u}_2)
+\gamma\,\sigma(q^2u_1^2)\,\delta\,\sigma(q^2u_2^2)
\right)\phi(u_1)\,\phi(u_2).
\end{align*}

The following result gives a Pfaffian expression for the DSASM partition function. 

\begin{thm}[{\cite[Theorem 11]{behrend2023diagonally}}]
\label{partfunc}
The DSASM partition function is given by 
    \begin{equation*}
        Z_n(u_1,\dots,u_n)=\prod_{1 \leq i<j \leq n} 
        \frac{\sighhh(q^2 u_i u_j) \sighhh(q^2 \bar{u}_i \bar{u}_j)}{\sighhh(u_i \u_j)}
        \Pf_{\chi_{\mathrm{even}}(n) \leq i < j \leq n} 
        \left(
        \begin{cases}
            Z_{1}(u_j) & i=0,
            \\
        \frac{\sighhh(u_i \bar{u}_j) Z_2(u_i,u_j)}{\sighhh(q^2 u_i u_j) \sighhh(q^2 \bar{u}_i \bar{u}_j)}    & i \geq 1.
        \end{cases}
        \right)
    \end{equation*} 
\end{thm}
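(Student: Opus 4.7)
The plan is to follow the Izergin--Korepin--Kuperberg strategy adapted to the triangular grid $T_n$ and its diagonal boundary. The argument characterizes $Z_n(u_1,\ldots,u_n)$ uniquely by (i) symmetry in the $u_i$'s, (ii) a Laurent-polynomial degree bound in each $u_k$, and (iii) a recursion at certain special values of pairs $(u_i,u_j)$, and then checks that the right-hand side enjoys the same three properties and reduces correctly in the base cases $n=1,2$.

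First I would establish symmetry. Swapping $u_i \leftrightarrow u_{i+1}$ in the bulk is the standard Yang--Baxter move: an auxiliary $R$-matrix vertex is inserted between two rows and slid across the grid. Because the grid is triangular, the $R$-matrix must traverse a left-boundary vertex, so one also needs a reflection equation; the boundary weights in \cref{tale} are calibrated (via the parameters $\alpha,\beta,\gamma,\delta,q$ and the free function $\phi$) precisely so that this reflection equation holds. Once symmetry is in hand, polynomiality and degree bounds in each $u_k$ (say, as a Laurent polynomial in $u_k$ and $\bar u_k$) are read off from the explicit dependence of the bulk and boundary weights on the $u_i$'s. The key recursion comes from specializing $u_i = q^{-2}\bar u_j$ (or its dual), which forces $\sighhh(q^2 u_i u_j)=0$ and freezes a cluster of bulk vertices, producing an explicit factor times $Z_{n-2}$ evaluated at the remaining variables.

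Next I would verify that the right-hand side shares these properties. Symmetry of the prefactor $\prod_{i<j}\sighhh(q^2 u_i u_j)\sighhh(q^2\bar u_i\bar u_j)/\sighhh(u_i\bar u_j)$ is manifest; symmetry of the Pfaffian part follows from the invariance of a Pfaffian under simultaneous row/column permutation, handled using \eqref{out} to absorb the factors crossing the $i=0$ row when $n$ is odd. The degree bounds follow from the shape of the matrix entries, which are built from $Z_1$ and $Z_2$. For the recursion, the Desnanot--Jacobi (Sylvester) identity for Pfaffians expands the $2b\times 2b$ Pfaffian along a chosen pair of indices and, at the specialization above, reduces it to a $(2b-2)\times(2b-2)$ Pfaffian; a careful bookkeeping of the prefactor matches the reduction to the recursion for $Z_n$ on the left-hand side.

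After confirming the base cases $n=1,2$ (where the identity is tautological given how $Z_1$ and $Z_2$ sit inside the matrix entries), induction on $n$, split according to the parity encoded by $\chi_{\mathrm{even}}(n)$, closes the argument via a Lagrange-type uniqueness statement for Laurent polynomials of bounded degree agreeing at enough points. I expect the main obstacle to be Step 1: checking that the specific left-boundary weights of \cref{tale} satisfy the correct reflection relation and tracking how that relation interacts with the triangular geometry of $T_n$ when the auxiliary $R$-matrix crosses a diagonal vertex $(i,i)$---a subtlety absent in the rectangular 6V models for which this technique was originally designed.
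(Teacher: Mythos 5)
The paper you are working from does not actually prove \cref{partfunc}: it is imported verbatim from \cite[Theorem~11]{behrend2023diagonally}, so there is no internal proof to compare against. Your outline does match the strategy used in that reference --- symmetry of $Z_n$ via the Yang--Baxter equation plus a reflection-type relation at the diagonal boundary, Laurent degree bounds in each $u_k$, a reduction to $Z_{n-2}$ at the zeros of $\sighhh(q^2u_iu_j)$ and $\sighhh(q^2\u_i\u_j)$, and an interpolation/uniqueness argument closed by the (tautological) base cases $n=1,2$ --- so the approach is the right one.

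As written, however, this is a plan rather than a proof, and the three steps you defer are exactly where the substance lies. (i) The parameters $\alpha,\beta,\gamma,\delta$ and the function $\phi$ in \cref{tale} are \emph{arbitrary}; the reflection relation holds because of the specific $u_i$-dependence built into the four left-boundary weights, not because the parameters are tuned, and this identity must actually be verified as the $R$-matrix crosses a vertex $(i,i)$. (ii) The uniqueness step requires an explicit count: the width of $Z_n$ as a Laurent polynomial in $u_n$ (coming from the $n-1$ bulk vertices involving $u_n$ plus the boundary vertex at $(n,n)$, after stripping $\phi(u_n)$) must be shown to be strictly dominated by the number of specializations $u_nu_j=\pm q^{\pm2}$ at which the recursion applies; without this comparison the argument does not close. (iii) On the Pfaffian side the relevant tool is not Desnanot--Jacobi but the Pfaffian minor (Laplace-type) expansion along a row: at the specialization one entry's denominator $\sighhh(q^2u_iu_j)$ vanishes against the same factor in the prefactor, isolating the term containing $\Pf$ of the matrix with rows and columns $i,j$ deleted. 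Note also that the Pfaffian alone is \emph{anti}symmetric under $u_i\leftrightarrow u_j$ (a simultaneous row/column transposition multiplies a Pfaffian by $-1$); only its product with the antisymmetric prefactor $\prod_{i<j}\sighhh(u_i\u_j)^{-1}$ is symmetric, so the sign bookkeeping in your symmetry check for the right-hand side needs care, especially across the bordering row $Z_1(u_j)$ in the odd case. None of these is a fatal objection, but each must be carried out before the outline counts as a proof.
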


Now we consider a specialized DSASM partition function and denoted $\widetilde{Z}_n(u_1,\ldots,u_n)$,

\begin{equation}
\label{SpecZ}
\widetilde{Z}_n(u_1,\ldots,u_n)=Z_n(u_1,\ldots,u_n)\big|_{\alpha=\beta=s,\,
\gamma=\delta=1/\sigma(q),\, \phi(u_i)=1/(qu_i+\q\bar{u}_i), i=1,\dots,n}.
\end{equation}

For examples, 
\begin{equation}
\label{12} 
\begin{split}
\widetilde{Z}_1(u_1)=& \,s,\\
    \widetilde{Z}_2(u_1,u_2) = &  \, \frac{s^2\,\sigma(q^2\u_1\u_2)}{\sigma(q^4)} 
+\frac{\sigma(q^2u_1^2)\,\sigma(q^2u_2^2)}
{\sigma(q)^2 (qu_1+\q \u_1) (qu_2+\q \u_2)}\\
= & \,\frac{s^2\,(q^2\u_1\u_2-\q^2u_1u_2)}{(q^4-\q^4)} 
+\frac{(qu_1-\q\u_1)\,(qu_2-\q\u_2)}{(q-\q)^2}.
    \end{split}
\end{equation}

The following relation holds between the DSASM generating function given in \eqref{gf} and the specialized DSASM partition function. 

\begin{lem}[{\cite[Lemma 12]{behrend2023diagonally}}] 
\label{ZX}
The specialized DSASM generating function and DSASM partition function are related by
    \begin{equation*}
        \begin{split}
            \widetilde{Z}_n(z,\underbrace{1,\dots,1}_{n-1}) &=
            \frac{\sigma(q^2)^{(n-1)(n-2)/2} \sigma(q^2 \z)^{n-1}}{\sigma(q^4)^{(n)(n-1)/2} \sigma(q^2 z)}
            \\
           & \times \left(
           \frac{(q+\q) \sigma(qz) \sigma(q^2 \z)}{\sigma(q^2 z)} 
           X_n\left(q^2+\q^2,s,\frac{\sigma(q^2 z)}{\sigma(q^2 \z)}\right)
           - s \sigma(z) X_{n-1}(q^2+\q^2,s,1)
           \right),
        \end{split}
    \end{equation*}
   where $z$ is arbitrary,
\end{lem}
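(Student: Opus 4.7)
The plan is to unfold the left-hand side directly from \eqref{def-Z} with the specialization \eqref{SpecZ}, substitute $u_1 = z$ and $u_2 = \cdots = u_n = 1$, and reorganize the resulting sum over configurations as a sum over DSASMs via the bijection \eqref{bij}. Once the substitutions are made, every bulk weight with $2 \leq i < j$ collapses to $1$ (for $\Bulkv,\Bulkvi$) or to $P := \sighhh(q^2) = \sigma(q^2)/\sigma(q^4)$ (for the four zero types), and every left boundary weight at $i \geq 2$ collapses to $s$ or $1$; so all the $z$-dependence lives in row $1$.

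The next step is to identify the possible vertex types in row $1$. Since every top edge of $T_n$ is directed upward, each bulk vertex $(1,j)$ has its top edge outgoing, forcing its type into $\{\Bulkv,\Bulki,\Bulkiii\}$, and similarly $(1,1)$ must be either $\Lftup$ or $\Lftin$. Combined with the arrow rules in row $1$ (horizontal arrows point inward at the unique $+1$ and reverse direction there), one finds, for $T := T(A)$: vertex $(1,j)$ is $\Bulki$ for $1 < j < T$ (weight $\sighhh(q^2 z)$), $\Bulkv$ for $j = T \geq 2$ (weight $1$), and $\Bulkiii$ for $j > T$ (weight $\sighhh(q^2\z)$); vertex $(1,1)$ is $\Lftup$ of weight $s$ when $T = 1$ and otherwise $\Lftin$ of weight $\sigma(qz)/\sigma(q)$, using the factorization $\sigma(q^2 z^2) = \sigma(qz)(qz + \q\z)$.

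Now I would split the sum over DSASMs by whether $T(A) = 1$ or $T(A) \geq 2$. In the first case, DSASMs of order $n$ with $T = 1$ biject with arbitrary DSASMs $B$ of order $n-1$ via $A = (1)\oplus B$, producing a contribution $\sighhh(q^2\z)^{n-1}\,P^{\binom{n-1}{2}}\,s\,X_{n-1}(q^2+\q^2,s,1)$. In the second case, setting $r = q^2+\q^2 = P^{-1}$ and $t_z = \sigma(q^2 z)/\sigma(q^2\z) = \sighhh(q^2 z)/\sighhh(q^2\z)$, the total weight of $A$ factors cleanly as
\[
\frac{\sigma(qz)}{\sigma(q)}\,P^{\binom{n-1}{2}+1}\,\sighhh(q^2 z)^{-2}\,\sighhh(q^2\z)^{n}\,s^{S(A)}\,r^{R(A)}\,t_z^{T(A)},
\]
and summing over DSASMs with $T(A) \geq 2$ gives $X_n(r,s,t_z) - s\,t_z\,X_{n-1}(r,s,1)$, the second term subtracting the $T = 1$ contribution.

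Adding the two contributions, the coefficient of $X_n$ collapses directly to the target form via $\sigma(q^2) = (q+\q)\sigma(q)$ and the arithmetic $\binom{n-1}{2} + n - 1 = n(n-1)/2$; the coefficient of $X_{n-1}$ becomes $s\,\sighhh(q^2\z)^{n-1}\,P^{\binom{n-1}{2}}\bigl[1 - \sigma(qz)\sigma(q^2)/(\sigma(q)\sigma(q^2 z))\bigr]$, which simplifies to $-s\,\sigma(z)/\sigma(q^2 z)$ times the same prefactor by the elementary identity $\sigma(q^2 z) - (q+\q)\sigma(qz) = -\sigma(z)$ (a direct expansion in $q,z$). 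I expect the main obstacle to be the careful bookkeeping of vertex subtypes in row $1$ together with the $(1,1)$ corner (handling $T = 1$ separately, and verifying that the arrow-upward convention uniquely determines $\Bulki$ versus $\Bulkiii$ and $\Lftup$ versus $\Lftin$), after which the algebraic collapse $\sigma(q^2 z) - (q+\q)\sigma(qz) = -\sigma(z)$ completes the match.
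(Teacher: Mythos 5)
Your proposal is correct. The paper itself gives no proof of this lemma (it is quoted from \cite[Lemma 12]{behrend2023diagonally}), and your direct verification --- specializing $u_1=z$, $u_2=\dots=u_n=1$ in \eqref{def-Z}, observing that the only $z$-dependent weights sit in row $1$, classifying the row-$1$ local configurations by the position $T$ of the unique $1$, and splitting the sum according to $T=1$ versus $T\geq 2$ --- is sound and complete; I checked that the coefficients of $X_n$ and $X_{n-1}$ produced by your bookkeeping (using $\sigma(q^4)/\sigma(q^2)=q^2+\q^2$, $\binom{n-1}{2}+n-1=n(n-1)/2$, and $\sigma(q^2z)-(q+\q)\sigma(qz)=-\sigma(z)$) match the stated formula exactly. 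One cosmetic slip: since the edge joining $(1,1)$ to the top vertex $(0,1)$ must point upward, the corner configuration for $T\geq 2$ is \Lftout\ rather than \Lftin; under the specialization \eqref{SpecZ} both carry the same weight $\sigma(qz)/\sigma(q)$, so nothing downstream is affected.
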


With the substitution given in~\eqref{SpecZ}, the left boundary weights in \cref{tale} are
\begin{equation}
\label{LftW} 
W\left(\, \Lftup,u_i\right)=W\left(\, \Lftdown,u_i\right)=s,\quad 
W\left(\, \Lftin,u_i\right)= W\left(\, \Lftout,u_i\right)=\frac{\sigma(qu_i)}{\sigma(q)}.
\end{equation}

Suppose $C \in \mathrm{6V}(n)$ is the image of $A \in \text{DSASM}(n)$ under the bijection given in \eqref{bij}. Then 
\begin{equation*}
\begin{split}
R(A)&=\text{the number of local configurations \Bulkv\ and \Bulkvi\ in }C,\\
S(A)&=\text{the number of local configurations \Lftup\ and \Lftdown\ in }C,\\
T(A)& =\text{the column index of the unique local configuration \Bulkv\ in the first row of }C,
\end{split}
\end{equation*}
where recall the notion of $R(A)$, $S(A)$ and $T(A)$ from \eqref{defnRST}. Recall that for $A \in \text{OSASM}(n)$, $S(A)=0$ if $n$ is even and  $S(A)=1$ if $n$ is odd. Therefore, the image set of the restriction of the bijection in \eqref{bij} to OSASM$(n)$ is the set of all 
configurations  $C \in \mathrm{6V}(n)$ such that the number of local configurations \Lftup\ and \Lftdown\ in $C$ is $0$ if $n$ is even and $1$ if $n$ is odd. Thus, in view of \eqref{LftW},
define the {OSASM partition function}
for the OSASMs of order $n$ as
\[
\begin{cases}
 \widetilde{Z}_n(u_1,\dots,u_n)\big|_{s=0} & n \text{ is even},\\[0.2cm]
 \bigl(\widetilde{Z}_n(u_1,\dots,u_n)/s\bigr)\Big|_{s=0} & n \text{ is odd.}
\end{cases}
\]
If $q^2+\bar{q}^2=1$, then the OSASM partition function specialized at $(\underbrace{1,\dots,1}_{n})$ counts the number of OSASMs in OSASM$(n)$.

\section{A product formula for the number of odd-order OSASMs}
\label{sec:product}
In this section, we write a proof of \eqref{OSams-odd}. The proof uses an equality between the generating function for odd-order OSASMs with specialized indeterminates and a certain symplectic character when $q^2+\bar{q}^2=1$ (see \cref{odd-osasm}). The symplectic character is the irreducible characters of the classical groups of type $C$ and is a symmetric Laurent polynomial indexed by an integer partition. Recall that an {integer partition} $\lambda$ is a weakly decreasing sequence of nonnegative integers such that only finitely many terms of $\lambda$ are nonzero. The {symplectic character} indexed by an integer partition $\lambda=(\lambda_1,\lambda_2,\dots,\lambda_n)$ is given by 

\begin{equation*}
\sp_\lambda(u_1,\dots,u_n)=
\frac{\ds \det_{1\le i,j\le n}\Bigl(u_i^{\lambda_j+n+1-j}-\u_i^{\lambda_j+n+1-j}\Bigr)}
{\ds \det_{1\le i,j\le n}\Bigl(u_i^{n+1-j}-\u_i^{n+1-j}\Bigr)}.
\end{equation*}

See~\cite{FulHar91} for a background on symplectic characters. For $q^2+\bar{q}^2=1$, it follows from \cite[Theorem 2.5(2)]{okada2006enumeration}, as well as from \cite[Theorem 5]{razumov2004refined} that the following equality holds true. 
\begin{equation}
\label{Zsymp}
\sp_{(n,n,\dots,0,0)}(u_1^2,\dots,u_{2n+2}^2)= \frac{3^{n(n+1)} \sigma(q)^{2n+2} \widetilde{Z}_{2n+2}(u_1,\dots,u_{2n+2})}{\prod_{i=1}^{2n+2} \sigma(qu_i)}\Bigg|_{s=0,q^2+\bar{q}^2=1}.
\end{equation} 

We now prove a similar equality involving the partition function for odd-order OSASMs.

\begin{thm}[{\cite[Conjecture 17]{behrend2023diagonally}}] 
\label{odd-osasm}
The partition function for odd-order OSASMs satisfies the following:
    \begin{equation}
    \label{Zsymp-odd}
    \begin{split}
  \left(\widetilde{Z}_{2n+1}(u_1,\dots,u_{n},\u_1,\dots,\u_{n},1)/s \right)&\Big|_{s=0,q^2+\bar{q}^2=1}  \\
 =  
 3^{-n^2} \prod_{i=1}^n \frac{(u_i+\u_i)^2}{u_i^2+\u_i^2-1} &
\, \sp_{(n,n,\dots,0,0)}(u_1^2,\dots,u_{n}^2,\u_1^2,\dots,\u_{n}^2,1,-1).
 \end{split}
    \end{equation}
\end{thm}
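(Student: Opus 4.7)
The plan is to reduce the odd-order identity to the known even-order identity \eqref{Zsymp} via the Pfaffian formula \cref{partfunc}. First, apply \cref{partfunc} with $n$ replaced by $2n+1$. Under the specialization \eqref{SpecZ}, the zeroth row of the resulting $(2n+2)\times(2n+2)$ skew-symmetric matrix has every off-diagonal entry equal to $\widetilde Z_1(u_j)=s$. By the scaling property of the Pfaffian, one may factor $s$ out of row and column $0$ (which multiplies the Pfaffian by $s$), and then set $s=0$ in the remaining entries (each of which depends only on $s^2$ by \eqref{12}). This gives
\[
\frac{\widetilde Z_{2n+1}(u_1,\ldots,u_{2n+1})}{s}\bigg|_{s=0}
\;=\;\mathcal{P}_{2n+1}\cdot\Pf(\widehat M),
\]
where $\mathcal{P}_{2n+1}$ is the prefactor from \cref{partfunc} and $\widehat M$ is the skew-symmetric matrix whose zeroth row is all $1$'s and whose remaining entries are the quotients $\sighhh(u_i\bar u_j)\widetilde Z_2(u_i,u_j)|_{s=0}/\bigl(\sighhh(q^2 u_iu_j)\sighhh(q^2\bar u_i\bar u_j)\bigr)$.

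Next, apply \cref{partfunc} with $n$ replaced by $2n+2$, specializing the variables as $(v_1,\ldots,v_{2n+2})=(u_1,\ldots,u_n,\bar u_1,\ldots,\bar u_n,1,i)$, to obtain $\widetilde Z_{2n+2}|_{s=0}=\mathcal{P}_{2n+2}\cdot\Pf(N)$. For this specialization, the LHS of \eqref{Zsymp} is exactly the symplectic character on the RHS of the target statement, and the submatrix of $N$ indexed by $\{1,\ldots,2n+1\}$ coincides with the corresponding submatrix of $\widehat M$ (both equal the same $\widetilde Z_2$-quotient at $s=0$). Via \eqref{Zsymp}, the theorem is thus equivalent to an explicit Pfaffian identity relating $\Pf(\widehat M)$ and $\Pf(N)$.

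Expanding both Pfaffians along their ``special'' rows (the $0$-th for $\widehat M$, the $(2n+2)$-nd for $N$) produces two linear combinations of the same sub-Pfaffians $\Pf(N^{(j)})$ with coefficients $(\pm 1)$ and $\pm N_{j,2n+2}$, respectively. Using \eqref{12} and the identities $\bar i=-i$, $qi+\bar q\bar i=i\sigma(q)$, $\sigma(q^2 i^2)=-\sigma(q^2)$, $\sigma(q^2)^2=-3$, $\sigma(q^4)=\sigma(q^2)$, $\sigma(q)^2=-1$ (all valid when $q^2+\bar q^2=1$), one computes $N_{j,2n+2}$ as an explicit rational function of $u_j$. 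Combined with $\mathcal{P}_{2n+1}/\mathcal{P}_{2n+2}$ and the scalar prefactor in \eqref{Zsymp}, the resulting identity should telescope to produce the factor $3^{-n^2}\prod_{i=1}^n (u_i+\bar u_i)^2/(u_i^2+\bar u_i^2-1)$ on the right.

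The main obstacle is that since $N_{j,2n+2}$ depends nontrivially on $j$, the two Pfaffians are not simple scalar multiples, so the identity cannot be established by row scaling alone. I would handle this by induction on $n$, specializing $u_n=1$: $\Pf(\widehat M)$ should factor via a Laplace-type expansion isolating the $2\times 2$ block corresponding to the pair $(u_n,\bar u_n)=(1,1)$, reducing to the $(n-1)$-analogue of the identity times an explicit scalar; the symplectic character should admit a matching factorization via a branching rule for $\sp_{(n,\ldots,0,0)}$ at two specialized arguments. Verifying that these two factorizations match scalar for scalar is the crux of the argument; the base case $n=0$ is immediate from $\widetilde Z_1=s$ together with the defining formula for $\sp$.
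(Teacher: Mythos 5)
Your setup coincides with the paper's: apply \cref{partfunc} to both $\widetilde Z_{2n+1}/s$ and $\widetilde Z_{2n+2}$, factor the single $s$ out of the zeroth row of the first Pfaffian, invoke \eqref{Zsymp} at the specialization $(u_1,\dots,u_n,\bar u_1,\dots,\bar u_n,1,\iota)$, and reduce the theorem to an identity between $\Pf(\widehat M)$ and $\Pf(N)$, where $N$ and $\widehat M$ agree on the index set $\{1,\dots,2n+1\}$ and differ only in the last row/column. Up to bookkeeping of prefactors this is exactly the paper's reduction to showing $\Psi(2)=6\,\Psi(1)$. But the proof stops precisely at the step you yourself identify as the crux. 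The paper's resolution is concrete and you do not reproduce it: under the specialization one computes $N_{j,2n+2}=a_j+6$ with $a_j=3\sqrt3\,(u_j-\bar u_j)/(q\bar u_j-\bar q u_j)$, then uses multilinearity of $\det(N)$ in its last row and column to split it into four determinants; the three determinants involving the vector $(a_j)$ vanish because the substitution $u_{n+i}=\bar u_i$ creates explicit linear dependencies (after the row operations $R_{n+i}\to R_{n+i}+\frac{q\bar u_i-\bar q u_i}{qu_i-\bar q\bar u_i}R_i$ and the matching column operations, an entire $n\times n$ block together with two boundary rows becomes zero). This yields $\det(N)=36\det(\widehat M)$, hence $\Pf(N)=6\Pf(\widehat M)$. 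Nothing in your proposal supplies a substitute for this vanishing argument.

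The alternative you sketch --- induction on $n$ by specializing $u_n=1$ --- would not close the gap as stated. The identity to be proved is an identity of rational functions in $u_n$; evaluating at the single point $u_n=1$ cannot establish it without an accompanying degree bound and enough further evaluation points (or a genuine recursion in $u_n$), none of which are provided. Worse, at $u_n=1$ three of the spectral parameters coincide ($u_n=\bar u_n=u_{2n+1}=1$), so the entries $\sighhh(u_i\bar u_j)$ vanish while the prefactor $\prod 1/\sighhh(u_i\bar u_j)$ blows up, and both sides must be interpreted as limits; the claimed Laplace-type factorization of $\Pf(\widehat M)$ and the matching branching rule for $\sp_{(n,n,\dots,0,0)}$ at the doubled argument are asserted, not proved. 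The base case and the overall architecture are fine, but without either the paper's multilinearity-plus-vanishing argument or a worked-out replacement for it, the proof is incomplete at its essential step.
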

\begin{proof}[{First proof of \cref{odd-osasm}}]
Substituting $u_{n+i}=\u_i$, $1 \leq i \leq n$, $u_{2n+1}=1$, $u_{2n+2}=\iota$ in \eqref{Zsymp}, we have that
\begin{equation}
\label{new-1}
\begin{split}
\sp_{(n,n,\dots,0,0)}&(u_1^2,\dots,u_{n}^2,\bar{u}_1^2,\dots,\bar{u}_{n}^2,1,-1)\\
= & \frac{3^{n(n+1)} \sigma(q)^{2n+2} \widetilde{Z}_{2n+2}(u_1,\dots,u_{n},\bar{u}_1,\dots,\bar{u}_{n},1,\iota)}{\sigma(q) \sigma(q \iota) \prod_{i=1}^{n} \sigma(qu_i) \sigma(q\bar{u}_i)}\Bigg|_{s=0,q^2+\bar{q}^2=1}.
\end{split}
\end{equation}
If $q^2+\bar{q}^2=1$, then $q-\bar{q}=\iota$, $q+\bar{q}=\sqrt{3}$ and 
\[
 \frac{\sigma(q)^{2n+2}}{\sigma(q) \sigma(q \iota) \prod_{i=1}^{n} \sigma(qu_i) \sigma(q\bar{u}_i)} = \frac{(q-\q)^{2n+1} }{\iota (q+\q) \prod_{i=1}^{n} (q^2+\bar{q}^2-u_i^2-\u_i^2)} = 
 \frac{1}{\sqrt{3} \prod_{i=1}^{n} (u_i^2+\u_i^2-1)}.
\]
Plugging this into \eqref{new-1}, we have that
\begin{equation*}
    \sp_{(n,n,\dots,0,0)}(u_1^2,\dots,u_{n}^2,\bar{u}_1^2,\dots,\bar{u}_{n}^2,1,-1) = \frac{3^{n(n+1)} \widetilde{Z}_{2n+2}(u_1,\dots,u_{n},\bar{u}_1,\dots,\bar{u}_{n},1,\iota)\big|_{s=0,q^2+\bar{q}^2=1}}{\sqrt{3} \prod_{i=1}^{n} (u_i^2+\u_i^2-1)}.
\end{equation*}
Therefore, considering \eqref{Zsymp-odd}, it is enough to show that
\begin{equation}
\label{1}
\frac{\widetilde{Z}_{2n+1}(u_1,\dots,u_{n},\u_1,\dots,\u_{n},1)/s \big|_{s=0,q^2+\bar{q}^2=1}}
{\widetilde{Z}_{2n+2}(u_1,\dots,u_{n},\bar{u}_1,\dots,\bar{u}_{n},1,\iota)
\big|_{s=0,q^2+\bar{q}^2=1}}
=3^{(2n-1)/2} \, \prod_{i=1}^n \frac{(u_i+\u_i)^2}{(u_i^2+\u_i^2-1)^2}.
\end{equation}
% \[
% =\prod_{i=1}^n \frac{(u_i+\u_i)^2}{(u_i^2-1+\u_i^2)^2} {3^{n-1/2}}.
% \]
By \cref{partfunc} and \eqref{SpecZ}, we have that 
\begin{equation}
\label{new-2}
\begin{split}
\widetilde{Z}_{2n+1}(u_1,u_2,\dots,u_{2n+1})/s&\\
=\prod_{1 \leq i<j \leq 2n+1} &
        \frac{\sighhh(q^2 u_i u_j) \sighhh(q^2 \u_i \u_j)}{\sighhh(u_i \u_j)} 
        \Pf_{0 \leq i < j \leq 2n+1} 
        \left(
        \begin{cases}
           1 & i=0,
            \\
        \frac{\sighhh(u_i \u_j) \widetilde{Z}_2(u_i,u_j)}{\sighhh(q^2 u_i u_j) \sighhh(q^2 \u_i \u_j)}    & i \geq 1
        \end{cases}
        \right)
        \end{split}
\end{equation}
and 
\begin{equation}
\label{new-3}
\begin{split}
\widetilde{Z}_{2n+2}(u_1,u_2,\dots,u_{2n+2})&\\
=
\prod_{1 \leq i<j \leq 2n+2} &
        \frac{\sighhh(q^2 u_i u_j) \sighhh(q^2 \u_i \u_j)}{\sighhh(u_i \u_j)}
        \Pf_{1 \leq i < j \leq 2n+2} 
        % \begin{cases}
        \left( \frac{\sighhh(u_i \u_j) \widetilde{Z}_2(u_i,u_j)}{\sighhh(q^2 u_i u_j) \sighhh(q^2 \u_i \u_j)}  \right). 
        % & i \geq 1.
        % \end{cases}
        \end{split}
\end{equation}
Using \eqref{12}, we note that
\[
\widetilde{Z}_2(u_1,u_2)\big|_{s=0,q^2+\bar{q}^2=1}=-(qu_1-\q\u_1)\,(qu_2-\q\u_2).
\]
Substituting $u_{n+i}=\u_i$, $1 \leq i \leq n$, $u_{2n+1}=1$, $u_{2n+2}=\iota$ in \eqref{new-2} and \eqref{new-3} and then taking ratios, we have that
\begin{equation}
\label{new-4}
\begin{split}
\frac{\widetilde{Z}_{2n+1}(u_1,\dots,u_{n},\u_1,\dots,\u_{n},1)/s \big|_{s=0,q^2+\bar{q}^2=1}}
{\widetilde{Z}_{2n+2}(u_1,\dots,u_{n},\bar{u}_1,\dots,\bar{u}_{n},1,\iota)
\big|_{s=0,q^2+\bar{q}^2=1}} &\\ =
 \prod_{i=1}^{2n+1} &\frac{\sighhh(-\iota u_i)}{\sighhh(q^2 u_i \iota) \sighhh(-q^2 \u_i \iota)}\Bigg|_{\substack{q^2+\bar{q}^2=1\\
u_{n+i}=\u_i, 1 \leq i \leq n\\
u_{2n+1}=1}} \times \frac{\Psi(1)}{\Psi(2)},
\end{split}
\end{equation}
where 
\[
\Psi(1)=\Pf_{0 \leq i < j \leq 2n+1} 
        \left(
        \begin{cases}
           1 & i=0,
            \\
        -\frac{\sighhh(u_i \u_j)(qu_i-\q\u_i)\,(qu_j-\q\u_j)}{\sighhh(q^2 u_i u_j) \sighhh(q^2 \u_i \u_j)}    & i \geq 1
        \end{cases}
        \right)\Bigg|_{\substack{q^2+\bar{q}^2=1\\
u_{n+i}=\u_i, 1 \leq i \leq n\\
u_{2n+1}=1}}
\]
and 
\[
\Psi(2)=\Pf_{1 \leq i < j \leq 2n+2} 
        % \begin{cases}
        \left( \frac{-\sighhh(u_i \u_j)(qu_i-\q\u_i)\,(qu_j-\q\u_j)}{\sighhh(q^2 u_i u_j) \sighhh(q^2 \u_i \u_j)}  \right)\Bigg|_{\substack{q^2+\bar{q}^2=1\\
u_{n+i}=\u_i, 1 \leq i \leq n\\
u_{2n+1}=1, u_{2n+2}=\iota}}.
\]
% Note that if $q^2+\q^2=1$, then 
% $q+\q= \pm \sqrt{3}$, $q-\q=\pm \iota$, 
% $q^4-\q^4=\sqrt{3} \iota$, $q^4+\q^4=-1$ and   
Observe that 
\begin{equation*}
\begin{split}
\prod_{i=1}^{2n+1} \frac{\sighhh(-\iota u_i)}{\sighhh(q^2 u_i \iota) \sighhh(-q^2 \u_i \iota)}&\Bigg|_{\substack{q^2+\bar{q}^2=1\\
u_{n+i}=\u_i, 1 \leq i \leq n\\
u_{2n+1}=1}}
\\ = &
\frac{\sighhh(- \iota)}{\sighhh(q^2 \iota) \sighhh(-q^2 \iota)} 
\prod_{i=1}^n \frac{\sighhh(-u_i \iota)} {\sighhh(q^2 u_i \iota) \sighhh(-q^2 \u_i \iota)}
\frac{\sighhh(-\u_i \iota)}{\sighhh(q^2 \u_i \iota) \sighhh(-q^2 u_i \iota)}\Bigg|_{q^2+\bar{q}^2=1}\\
= & {2 \sqrt{3} } \,\prod_{i=1}^n \frac{3(u_i+\u_i)^2}{(u_i^2+\u_i^2-1)^2}.
\end{split}
\end{equation*}
Plugging this into \eqref{new-4}, we have that
\[
\frac{\widetilde{Z}_{2n+1}(u_1,\dots,u_{n},\u_1,\dots,\u_{n},1)/s \big|_{s=0,q^2+\bar{q}^2=1}}
{\widetilde{Z}_{2n+2}(u_1,\dots,u_{n},\bar{u}_1,\dots,\bar{u}_{n},1,\iota)
\big|_{s=0,q^2+\bar{q}^2=1}}={2 \sqrt{3} } \,\prod_{i=1}^n \frac{\sqrt{3}(u_i+\u_i)^2}{(u_i^2+\u_i^2-1)^2} \times \frac{\Psi(1)}{\Psi(2)}.
\]
In view of \eqref{1}, it is enough to show $\Psi(2)= 6 \Psi(1)$ to complete the proof. 
Suppose $Q$ is a skew-symmetric matrix of order $(2n+2)$ such that for $1 \leq i < j \leq 2n+2$,
\begin{equation*}
\begin{split}
Q_{i,j} &= 
    \frac{-\sighhh(u_i \u_j)(qu_i-\q\u_i)\,(qu_j-\q\u_j)}{\sighhh(q^2 u_i u_j) \sighhh(q^2 \u_i \u_j)}  \Bigg|_{\substack{q^2+\bar{q}^2=1\\
u_{n+i}=\u_i, 1 \leq i \leq n\\
u_{2n+1}=1, u_{2n+2}=\iota}} \\
&= \frac{\sqrt{3} \iota (u_i \u_j - \u_i u_j) (qu_i-\q\u_i)\,(qu_j-\q\u_j)}
        {u_i^2 u_j^2+\u_i^2 \u_j^2+1}  \Bigg|_{\substack{q^2+\bar{q}^2=1\\
u_{n+i}=\u_i, 1 \leq i \leq n\\
u_{2n+1}=1, u_{2n+2}=\iota}}.  
\end{split}
\end{equation*}
Then 
% \begin{equation}
% \Psi(2)=\Pf(Q)
% \end{equation} and 
\begin{equation}
\label{akmin}
\begin{split}
 \Psi(1)=\Pf_{0 \leq i<j \leq 2n+1} \left( 
\begin{cases}
    1 & i=0 \\
    Q_{i,j} & i \geq 1
\end{cases}
\right) = \Pf_{1 \leq i<j \leq 2n+2} \left( 
\begin{cases}
    Q_{i,j} & j \leq 2n+1\\
  1  & j=2n+2 
\end{cases}
\right)&, 
\\
\text{ and } \Psi&(2)=\Pf(Q).
\end{split}
\end{equation}
For $1 \leq i \leq 2n+1$, we note that 
\[
Q_{i,2n+2}=\frac{- 3\iota (u_i + \u_i) (qu_i-\q\u_i)}{u_i^2+\u_i^2-1}\Bigg|_{\substack{q^2+\bar{q}^2=1\\
u_{n+i}=\u_i, 1 \leq i \leq n\\
u_{2n+1}=1}}+6-6=\frac{3 \sqrt{3} (u_i - \u_i)}{(q \u_i-\q u_i)}\Bigg|_{\substack{q^2+\bar{q}^2=1\\
u_{n+i}=\u_i, 1 \leq i \leq n\\
u_{2n+1}=1}} + 6, 
\]
where we use $q+\bar{q}=\sqrt{3}$. 
Since a determinant is linear as a function of each of the rows and columns of the matrix, we have that
\begin{equation}
\label{tt}
\begin{split}
\det(Q)=& {\det  \left(
\begin{array}{c|c}
    (Q_{i,j})_{1 \leq i,j \leq 2n+1} & a_i \\[0.2cm]
   \hline \\[-0.3cm]
   -a_j  & 0
\end{array}
\right)}
+
\det \left(
\begin{array}{c|c}
    (Q_{i,j})_{1 \leq i,j \leq 2n+1} & a_i \\[0.2cm]
   \hline \\[-0.3cm]
   -6  & 0
\end{array}
\right) \\
& \quad +  \det \left(
\begin{array}{c|c}
    (Q_{i,j})_{1 \leq i,j \leq 2n+1} & 6 \\[0.2cm]
   \hline \\[-0.3cm]
   -a_j  & 0
\end{array}
\right) + \det \left(
\begin{array}{c|c}
    (Q_{i,j})_{1 \leq i,j \leq 2n+1} & 6 \\[0.2cm]
   \hline \\[-0.3cm]
   -6  & 0
\end{array}
\right),
\end{split}
\end{equation}
where 
\[
a_i=\frac{3 \sqrt{3} (u_i - \u_i)}{(q \u_i-\q u_i)}\Bigg|_{\substack{q^2+\bar{q}^2=1\\
u_{n+i}=\u_i, 1 \leq i \leq n\\
u_{2n+1}=1}}.
\]
For convenience, denote the four determinants from the right side of \eqref{tt} by $\Phi_1, \Phi_2, \Phi_3$ and $\Phi_4$ following in order. Observe that, $\Phi_2=\Phi_3$ and $\Phi_4=6 \Psi(1)$. 
Consider the first determinant $\Phi_1$ from the right side of \eqref{tt}.
\[
\Phi_1=\det 
\left(
% \begin{array}{c}
\begin{array}{c|c|c|c}
(Q_{i,j})_{1 \leq i,j \leq n} 
& (R_{i,j})_{1 \leq i,j \leq n} 
& \frac{-(u_i - \u_i) (qu_i-\q\u_i)}
        {u_i^2+\u_i^2 +1}
        & \frac{ (u_i - \u_i)}
        {(q \u_i-\q u_i)} 
\\&&\\\hline&&\\
(S_{i,j})_{1 \leq i,j \leq n} 
        & 
(P_{i,j})_{1 \leq i,j \leq n} 
& \frac{(u_i - \u_i) (q \u_i-\q u_i)}
        {u_i^2+\u_i^2 +1} &
        \frac{ -(u_i - \u_i)}
        {(q u_i-\q \u_i)} 
\\&&\\\hline&&\\
\frac{(u_j - \u_j) (qu_j-\q\u_j)}
        {u_j^2+\u_j^2 +1} 
        & \frac{-(u_j - \u_j) (q \u_j-\q u_j)}
        {u_j^2+\u_j^2 +1} 
        & 0 & 
        0 \\&&\\\hline&&\\
    \frac{ (u_j - \u_j)}
        {(q \u_j-\q u_j)} &
        \frac{ -(u_j - \u_j)}
        {(q u_j-\q \u_j)} & 0 & 0
\end{array} 
\right), 
\]
where 
\[
Q_{i,j}= \frac{ \iota (u_i \u_j - \u_i u_j) (qu_i-\q\u_i)\,(qu_j-\q\u_j)}
        {u_i^2 u_j^2+\u_i^2 \u_j^2+1},
 \] 
\[
P_{i,j}= \frac{-  \iota (u_i \u_j - \u_i u_j) (q \u_i-\q u_i)\,(q \u_j-\q u_j)}
        {u_i^2 u_j^2+\u_i^2 \u_j^2+1},
\]
\[
R_{i,j}=\frac{ \iota (u_i u_j - \u_i \u_j) (qu_i-\q\u_i)\,(q \u_j-\q u_j)}{u_i^2 \u_j^2+\u_i^2 u_j^2+1}
\]
and
\[
S_{i,j}=\frac{ -\iota (u_i u_j - \u_i \u_j) (qu_j-\q\u_j)\,(q \u_i-\q u_i)}{u_i^2 \u_j^2+\u_i^2 u_j^2+1}.
\]
Applying the elementary row operations $R_{n+i} \rightarrow R_{n+i}+\left(\frac{q \bar{u}_i-\bar{q} u_i}{q u_i-\bar{q} \bar{u}_i}\right) R_i$ for $i=1,2,\dots,n$, we have that

\[
\Phi_1=\det 
\left(
% \begin{array}{c}
\begin{array}{c|c|c|c}
(Q_{i,j})_{1 \leq i,j \leq n} 
& (R_{i,j})_{1 \leq i,j \leq n}
& \frac{-(u_i - \u_i) (qu_i-\q\u_i)}
        {u_i^2+\u_i^2 +1}
        & \frac{ (u_i - \u_i)}
        {(q \u_i-\q u_i)} 
\\&&\\\hline&&\\
(M_{i,j})_{1 \leq i,j \leq n} 
        & 
(N_{i,j})_{1 \leq i,j \leq n}
& 0 & 0
\\&&\\\hline&&\\
\frac{(u_j - \u_j)(qu_j-\q\u_j)}
        {u_j^2+\u_j^2 +1} 
        & \frac{-(u_j - \u_j) (q \u_j-\q u_j)}
        {u_j^2+\u_j^2 +1} 
        & 0 & 
        0 \\&&\\\hline&&\\
    \frac{ (u_j - \u_j)}
        {(q \u_j-\q u_j)} &
        \frac{ -(u_j - \u_j)}
        {(q u_j-\q \u_j)} & 0 & 0
\end{array} 
\right),
\]
where 
\[
M_{i,j}=(q \u_i-\q u_i)\,(qu_j-\q\u_j) \left(\frac{\iota (u_i \u_j - \u_i u_j ) }
        {u_i^2 u_j^2+\u_i^2 \u_j^2+1} -
        \frac{\iota (u_i u_j - \u_i \u_j)}{u_i^2 \u_j^2+\u_i^2 u_j^2+1}\right),
\]
and 
\[
N_{i,j}=(q \u_i-\q u_i)\,(q \u_j-\q u_j) \left(\frac{\iota (u_i u_j - \u_i \u_j) }
        {u_i^2 \u_j^2+\u_i^2 u_j^2+1}-
        \frac{\iota (u_i \u_j - \u_i u_j) }
        {u_i^2 u_j^2+\u_i^2 \u_j^2+1}\right).
\]
Applying the elementary column operations $C_{j+n} \rightarrow C_{j+n} + \left(\frac{q \bar{u}_j-\bar{q} u_j}{q u_j-\bar{q} \bar{u}_j}\right) C_j$ for $j=1,2,\dots,n$, we have that
\begin{equation}
\label{cuta}
\Phi_1=\det 
\left(
% \begin{array}{c}
\begin{array}{c|c|c|c}
(Q_{i,j})_{1 \leq i,j \leq n} 
& (T_{i,j})_{1 \leq i,j \leq n} 
& \frac{-(u_i - \u_i) (qu_i-\q\u_i)}
        {u_i^2+\u_i^2 +1}
        & \frac{ (u_i - \u_i)}
        {(q \u_i-\q u_i)} 
\\&&\\\hline&&\\
(M_{i,j})_{1 \leq i,j \leq n}  
        & 
        (0)_{1 \leq i,j \leq n} 
& 0 & 0
\\&&\\\hline&&\\
\frac{(u_j - \u_j)(qu_j-\q\u_j)}
        {u_j^2+\u_j^2 +1} 
        & 0 
        & 0 & 
        0 \\&&\\\hline&&\\
    \frac{ (u_j - \u_j)}
        {(q \u_j-\q u_j)} &
        0 & 0 & 0
\end{array} 
\right),
\end{equation}
where \[
T_{i,j}=R_{i,j} + \left(\frac{q \bar{u}_j-\bar{q} u_j}{q u_j-\bar{q} \bar{u}_j}\right) Q_{i,j}.
\]
Since the determinant in \eqref{cuta} is zero, $\Phi_1=0$. 
Appyling the same set of operations to $\Phi_2$, we have that 
\[
\Phi_2=\det 
\left(
% \begin{array}{c}
\begin{array}{c|c|c|c}
(Q_{i,j})_{1 \leq i,j \leq n}
& (T_{i,j})_{1 \leq i,j \leq n}
& \frac{-(u_i - \u_i) (qu_i-\q\u_i)}
        {u_i^2+\u_i^2 +1}
        & \frac{ (u_i - \u_i)}
        {(q \u_i-\q u_i)} 
\\&&\\\hline&&\\
(M_{i,j})_{1 \leq i,j \leq n}
        & 
(0)_{1 \leq i,j \leq n}
& 0 & 0
\\&&\\\hline&&\\
\frac{(u_j - \u_j)(qu_j-\q\u_j)}
        {u_j^2+\u_j^2 +1} 
        & 0 
        & 0 & 
        0 \\&&\\\hline&&\\
     \frac{2}
        {\sqrt{3}} &
        \frac{2}
        {\sqrt{3}}+\frac{2(qu_j-\q\u_j)}
        {\sqrt{3}(q \u_j-\q u_j)} & \frac{2(qu_j-\q\u_j)}{\sqrt{3}} & 0
\end{array} 
\right)=0.
\]
Since $\Phi_1=0, \Phi_2=0$ and $\Phi_2=\Phi_3$, the first three determinants in \eqref{tt} are zero. So, 
\[
\det(Q)=36 \det \left(
\begin{array}{c|c}
    (Q_{i,j})_{1 \leq i,j \leq 2n+1} & 1 \\[0.2cm]
   \hline \\[-0.3cm]
   -1  & 0
\end{array}
\right). 
\]
Therefore, in view of \eqref{akmin}, 
$\Psi(2)=\Pf(Q)=6 \Psi(1)$. 
This completes the proof.
\end{proof}

Substituting $u_{n+1}=\bar{u}_1, \dots, u_{2n}=\bar{u}_n, u_{2n+1}=1, u_{2n+2}=\iota$ in \eqref{Zsymp}
and comparing it with \eqref{Zsymp-odd}, we get the following equality. 
\begin{cor} We have 
\label{corr} 
    \begin{equation}
    \begin{split}
   \left( \widetilde{Z}_{2n+1}(u_1,\dots,u_{n},\u_1,\dots,\u_{n},1)/s\right) \Big|_{s=0,q^2+\bar{q}^2=1} &\\
= 3^{(2n-1)/2} \, \prod_{i=1}^n \frac{(u_i+\u_i)^2}{(u_i^2+\u_i^2-1)^2} 
\times  \widetilde{Z}_{2n+2} (u_1,&\dots,u_{n},\bar{u}_1,\dots,\bar{u}_{n},1,\iota)
\big|_{s=0,q^2+\bar{q}^2=1}.
\end{split}
    \end{equation}
\end{cor}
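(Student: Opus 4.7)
The plan is simply to combine the two identities \eqref{Zsymp} and \eqref{Zsymp-odd}, since \cref{corr} is just what falls out when one eliminates the symplectic character between them. No new ingredient is needed, and indeed the computation already appears in pieces in the proof of \cref{odd-osasm}.

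Concretely, I would first specialize \eqref{Zsymp} along the slice $u_{n+i}=\bar u_i$ for $1\le i\le n$, $u_{2n+1}=1$, $u_{2n+2}=\iota$. This is exactly the substitution carried out at the start of the proof of \cref{odd-osasm} (yielding the displayed equation \eqref{new-1}), so I can quote it verbatim: it expresses $\sp_{(n,n,\dots,0,0)}(u_1^2,\dots,u_n^2,\bar u_1^2,\dots,\bar u_n^2,1,-1)$ as a ratio of $\widetilde Z_{2n+2}(u_1,\dots,u_n,\bar u_1,\dots,\bar u_n,1,\iota)\big|_{s=0,\,q^2+\bar q^2=1}$ to a product of $\sigma$-factors.

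Next I would substitute this expression for the symplectic character into the right-hand side of \eqref{Zsymp-odd} from \cref{odd-osasm}. At that point only a bookkeeping simplification remains. Using $q^2+\bar q^2=1$, so that $q\bar q=1$, $q-\bar q=\iota$, and $q+\bar q=\sqrt{3}$, one verifies (exactly as in the proof of \cref{odd-osasm}) that
\[
\frac{\sigma(q)^{2n+2}}{\sigma(q)\,\sigma(q\iota)\,\prod_{i=1}^{n}\sigma(qu_i)\,\sigma(q\bar u_i)}
\;=\;\frac{1}{\sqrt{3}\,\prod_{i=1}^{n}(u_i^2+\bar u_i^2-1)},
\]
since $\sigma(qu_i)\sigma(q\bar u_i)=-(u_i^2+\bar u_i^2-1)$ and $\sigma(q\iota)=\iota\sqrt{3}$. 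Combining this with the numerical factor $3^{-n^2}\cdot 3^{n(n+1)}=3^n$ coming from the two identities gives the overall prefactor $3^n/\sqrt{3}=3^{(2n-1)/2}$, and each $(u_i^2+\bar u_i^2-1)$ in the denominator of \eqref{Zsymp-odd} gets a second copy from the $\sigma$-product, producing the square in $(u_i^2+\bar u_i^2-1)^2$.

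There is no real obstacle: the only part requiring care is the sign/power bookkeeping with $\iota$ and $\sqrt{3}$, but this has already been checked once inside the proof of \cref{odd-osasm}, so it can be invoked rather than redone. The proof will therefore be just a few lines, beginning ``Substituting in \eqref{Zsymp} and combining with \eqref{Zsymp-odd} via \eqref{new-1} yields the claimed identity.''
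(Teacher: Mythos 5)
Your proposal is correct and follows exactly the paper's route: the paper likewise obtains \cref{corr} by substituting $u_{n+i}=\bar u_i$, $u_{2n+1}=1$, $u_{2n+2}=\iota$ into \eqref{Zsymp} and comparing with \eqref{Zsymp-odd}, with the $\sigma$-factor and power-of-$3$ bookkeeping already carried out inside the proof of \cref{odd-osasm}. Your prefactor computation $3^{n(n+1)-n^2}/\sqrt{3}=3^{(2n-1)/2}$ and the doubling of the $(u_i^2+\bar u_i^2-1)$ factors both check out.
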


\begin{thm} The number of OSASMs of order $(2n+1)$ is given by  
    \begin{equation}
    \label{OX}
    |\text{OSASM}(2n+1)|=2^{2n} X^{\mathrm{O}}_{2n+2}(1,-1),
    \end{equation}
\end{thm}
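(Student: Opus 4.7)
The plan is to derive \eqref{OX} by specializing the identity of \cref{corr} at $u_1=\cdots=u_n=1$ and then converting the resulting partition function value back into the OSASM generating function via \cref{ZX}.

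First I would set $u_1=\cdots=u_n=1$ in \cref{corr}. The product $\prod_{i=1}^n(u_i+\u_i)^2/(u_i^2+\u_i^2-1)^2$ collapses to $4^n$, while the left-hand side $\widetilde{Z}_{2n+1}(1,\ldots,1)/s\big|_{s=0,\,q^2+\q^2=1}$ equals $|\text{OSASM}(2n+1)|$ by the characterization of the OSASM partition function at $(1,\ldots,1)$ recorded at the end of \cref{sec:pf}. So \eqref{OX} reduces to showing
\[
\widetilde{Z}_{2n+2}(\underbrace{1,\ldots,1}_{2n+1},\iota)\big|_{s=0,\,q^2+\q^2=1}=3^{-(2n-1)/2}\,X^{\mathrm{O}}_{2n+2}(1,-1).
\]

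Next, invoking the symmetry of the DSASM partition function in its arguments (which follows from the Pfaffian formula of \cref{partfunc}, since swapping two variables multiplies both the prefactor $\prod_{i<j}\sighhh(u_i\u_j)$ and the Pfaffian by $-1$), I would rewrite the left-hand side as $\widetilde{Z}_{2n+2}(\iota,1,\ldots,1)$ and apply \cref{ZX} with $n$ replaced by $2n+2$ and $z=\iota$. Since $\bar\iota=-\iota$, a short check gives $\sigma(q^2\iota)/\sigma(q^2\bar\iota)=-1$, matching the $t=-1$ specialization in $X_{2n+2}$. Setting $s=0$ kills the $X_{n-1}$ term of \cref{ZX}, and the surviving term involves $X_{2n+2}(q^2+\q^2,0,-1)$, which by \eqref{XOdef} equals $X^{\mathrm{O}}_{2n+2}(1,-1)$ once $q^2+\q^2=1$ is enforced.

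The main obstacle is simplifying the scalar prefactor from \cref{ZX} at $z=\iota$ and $q^2+\q^2=1$. Fixing a branch so that $q+\q=\sqrt 3$ and $q-\q=\iota$, one obtains $\sigma(q^2)=\sigma(q^4)=\iota\sqrt 3$, $\sigma(q\iota)=\iota\sqrt 3$, $\sigma(q^2\iota)=\iota$, and $\sigma(q^2\bar\iota)=-\iota$. The net exponent of $\sigma(q^2)$ produced from the numerator and denominator is $(2n+1)(2n)/2-(2n+2)(2n+1)/2=-(2n+1)$, and after collecting the remaining powers of $\iota$ and $\sqrt 3$ together with the explicit factors $\sigma(q\iota)$, $\sigma(q^2\iota)$, $\sigma(q^2\bar\iota)$ in the bracketed part of \cref{ZX}, I expect the prefactor to simplify exactly to $3^{-(2n-1)/2}$. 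Combining with the $3^{(2n-1)/2}\,4^n$ produced in the first step then yields $|\text{OSASM}(2n+1)|=4^n\,X^{\mathrm{O}}_{2n+2}(1,-1)=2^{2n}\,X^{\mathrm{O}}_{2n+2}(1,-1)$, as required.
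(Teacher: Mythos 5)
Your proposal is correct and follows essentially the same route as the paper: specialize \cref{corr} at $u_1=\cdots=u_n=1$ (giving the factor $2^{2n}3^{(2n-1)/2}$), then apply \cref{ZX} with $z=\iota$ and $s=0$ and verify that the scalar prefactor collapses to $3^{-(2n-1)/2}$. The prefactor computation you outline does work out exactly as you expect, so there is no gap.
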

\begin{proof}
  We have,
    \begin{equation}
    \label{akk}
    \begin{split}
    |\text{OSASM}(2n+1)|=&
   \bigl( \widetilde{Z}_{2n+1}(\underbrace{1,\dots,1}_{2n+1})/s\bigr)\Big|_{s=0,q^2+\q^2=1}
    \\
    =&
    2^{2n} 3^{(2n-1)/2}
    \widetilde{Z}_{2n+2}(\underbrace{1,\dots,1}_{2n+1},\iota)\big|_{s=0,q^2+\q^2=1} 
    % =& 2^{2n} X^{\mathrm{O}}_{2n+2}(1,-1),
    \end{split}
    \end{equation}
  where we use \cref{corr} to get the second equality. By \cref{ZX}, for any arbitrary $z$, we have that
    \begin{equation}
    \label{zz}
        \begin{split}
            \widetilde{Z}_{2n+2}(z,\underbrace{1,\dots,1}_{2n+1})\big|_{s=0,q^2+\q^2=1} =
            \frac{\sigma(q^2)^{(2n+1)(n)} \sigma(q^2 \z)^{2n+1}}{\sigma(q^4)^{(n+1)(2n+1)} \sigma(q^2 z)} & \frac{(q+\q) \sigma(qz) \sigma(q^2 \z)}{\sigma(q^2 z)}
            \\
            \times &
           X_{2n+2}\left(q^2+\q^2,0,\frac{\sigma(q^2 z)}{\sigma(q^2 \z)}
           \right)\\
            = \frac{\sigma(q^2 \z)^{2n+2} (q+\q) \sigma(qz) }
           {\sigma(q^2)^{2n+1} \sigma(q^2 z)^2} & X^{\mathrm{O}}_{2n+2}\left(1,\frac{\sigma(q^2 z)}{\sigma(q^2 \z)}\right),
        \end{split}
    \end{equation}
    where the second equality uses \eqref{XOdef} and the fact that $\sigma(q^4)=\sigma(q^2)$ when $q^2+\q^2=1$. Substituting $z=\iota$ in \eqref{zz}, we get that 
    \begin{equation}
    \label{2}
    \begin{split}
        \widetilde{Z}_{2n+2}(\iota,\underbrace{1,\dots,1}_{2n+1})\big|_{s=0,q^2+\q^2=1}
        &=
        \frac{\sigma(-q^2 \iota)^{2n+2} (q+\q) \sigma(q \iota) }
           {\sigma(q^2)^{2n+1} \sigma(q^2 \iota)^2} X^{\mathrm{O}}_{2n+2}\left(1,\frac{\sigma(q^2 \iota)}{\sigma(-q^2 \iota)}\right)\\
= \frac{1}{3^{(2n-1)/2}}
           X^{\mathrm{O}}_{2n+2}\left(1,-1\right),
           \end{split}
    \end{equation}  
  where we use that $\sigma(q^2 \iota)=\iota, \sigma(-q^2 \iota)=-\iota$, $\sigma(q^2)=\sqrt{3} \iota$ and $(q+\bar{q})^2=3$ when $q^2+\q^2=1$. 
    Plugging this into \eqref{akk} completes the proof.
\end{proof}
Substituting the value of
$X^{\mathrm{O}}_{2n+2}(1,-1)$ from \eqref{XO} in \eqref{OX}, we get the following corollary.
\begin{cor}
\label{no}
The number of OSASMs of order $(2n+1)$ is given by
    \[
    |\text{OSASM}(2n+1)|=\frac{2^{n-1}(3n+2)!}{(2n+1)!}
\prod_{i=1}^n \frac{(6i-2)!}{(2n+2i+1)!}.
    \]
\end{cor}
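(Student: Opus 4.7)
The plan is to derive Corollary~\ref{no} as an immediate consequence of the two ingredients already in place: the reduction identity~\eqref{OX}, namely $|\text{OSASM}(2n+1)|=2^{2n}X^{\mathrm{O}}_{2n+2}(1,-1)$, established in the theorem just above, and the product formula~\eqref{XO} of Behrend, Fischer and Koutschan for the even-order OSASM generating function evaluated at $(1,-1)$. All that remains is a single substitution plus elementary arithmetic.

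Concretely, I would first shift the index in~\eqref{XO} by replacing $n$ with $n+1$, obtaining
\[
X^{\mathrm{O}}_{2n+2}(1,-1)=\frac{(3n+2)!}{2^{n+1}(2n+1)!}\prod_{i=1}^{n}\frac{(6i-2)!}{(2n+2i+1)!}.
\]
Then I would plug this into~\eqref{OX}. Combining the two powers of $2$ gives $2^{2n}\cdot 2^{-(n+1)}=2^{n-1}$, and the remaining factorial terms assemble verbatim into the product formula claimed for $|\text{OSASM}(2n+1)|$.

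There is no genuine obstacle at this step, since all of the difficulty has been absorbed into Theorem~\eqref{OX}, which in turn relies on \cref{odd-osasm} and the Pfaffian manipulations developed in its proof. The only things to verify carefully are the bookkeeping details of the index shift: the upper limit of the product in~\eqref{XO} is $n-1$, and after the substitution $n\mapsto n+1$ it becomes $n$, matching the range in Corollary~\ref{no}; similarly the denominator $(2n+2i-1)!$ inside the product becomes $(2n+2i+1)!$, in agreement with the statement; and the leading factor $(3n-1)!/[2^n(2n-1)!]$ becomes $(3n+2)!/[2^{n+1}(2n+1)!]$, which after multiplication by $2^{2n}$ yields precisely the stated prefactor $2^{n-1}(3n+2)!/(2n+1)!$.
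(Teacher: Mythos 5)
Your proposal is correct and coincides with the paper's own derivation, which likewise obtains the corollary by substituting $n\mapsto n+1$ in \eqref{XO} and inserting the result into \eqref{OX}. The index shift and the power-of-two bookkeeping ($2^{2n}\cdot 2^{-(n+1)}=2^{n-1}$) are exactly as you describe.
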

Substituting $u_1=\dots=u_n=1$ and using \cref{no}, we get the following product formula for the specialized symplectic character.
\begin{cor} 
We have 
    \[
\sp_{(n,n,n-1,n-1,\dots,0)}(\underbrace{1,\dots,1}_{2n+1},-1)=\frac{3^{n^2}(3n+2)!}{2^{n+1} (2n+1)!}
\prod_{i=1}^n \frac{(6i-2)!}{(2n+2i+1)!}.
\]
\end{cor}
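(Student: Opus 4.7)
The plan is to specialize \cref{odd-osasm} at $u_1=\cdots=u_n=1$ and then invoke \cref{no}. First I would observe that the left-hand side of \eqref{Zsymp-odd} evaluated at $u_1=\cdots=u_n=1$ is precisely the specialization of the OSASM partition function at $(\underbrace{1,\dots,1}_{2n+1})$, which by the definition given in \cref{sec:pf} counts $|\text{OSASM}(2n+1)|$ when $q^2+\bar{q}^2=1$.

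Next I would evaluate the rational prefactor on the right-hand side at $u_i=1$. Since $(u_i+\bar{u}_i)^2=4$ and $u_i^2+\bar{u}_i^2-1=1$ at $u_i=1$, the product $\prod_{i=1}^n (u_i+\bar{u}_i)^2/(u_i^2+\bar{u}_i^2-1)$ collapses to $4^n=2^{2n}$. Combining these two observations, \cref{odd-osasm} at $u_i=1$ yields the identity
\begin{equation*}
|\text{OSASM}(2n+1)| \;=\; 3^{-n^2}\cdot 2^{2n}\cdot \sp_{(n,n,\dots,0,0)}(\underbrace{1,\dots,1}_{2n+1},-1),
\end{equation*}
so that
\begin{equation*}
\sp_{(n,n,n-1,n-1,\dots,0,0)}(\underbrace{1,\dots,1}_{2n+1},-1) \;=\; \frac{3^{n^2}}{2^{2n}}\,|\text{OSASM}(2n+1)|.
\end{equation*}

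Finally I would substitute the closed form from \cref{no}. The $2^{n-1}/2^{2n}=2^{-(n+1)}$ combines cleanly with the $3^{n^2}$ factor, and the remaining ratios of factorials carry through verbatim, producing exactly the stated product. There is essentially no obstacle here — the whole argument is one specialization plus an arithmetic simplification — the only point to check carefully is that the partition indexing is consistent between \eqref{Zsymp-odd} (written $(n,n,\dots,0,0)$ with $2n+2$ parts) and the statement of the corollary (written $(n,n,n-1,n-1,\dots,0)$, which one should read as the same partition $(n,n,n-1,n-1,\dots,1,1,0,0)$ of length $2n+2$, matching the number of arguments $2n+2$ of the symplectic character).
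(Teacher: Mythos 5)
Your proposal is correct and is exactly the paper's argument: the paper obtains this corollary by substituting $u_1=\dots=u_n=1$ into \cref{odd-osasm}, identifying the left-hand side with $|\text{OSASM}(2n+1)|$, and invoking \cref{no}; the prefactor indeed collapses to $4^n$ and the powers of $2$ combine as you describe. Your remark about reading the partition as $(n,n,n-1,n-1,\dots,1,1,0,0)$ with $2n+2$ parts is the right reconciliation of the two notations.
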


\section{A symmetry property for even-order OSASMs}
\label{sec:symm}

In this section, we prove the following symmetry property for even-order OSASMs conjectured by Behrend, Fischer and Koutschan~\cite[Conjecture 16]{behrend2023diagonally}. As a corollary, we get that 
\begin{equation*}
\begin{split}
    |\{M \in \text{OSASM}(2n): R(M)=&r_0, \, T(M)=t_0\}|\\
    & = |\{M \in \text{OSASM}(2n): R(M)=r_0,\, T(M)=2n+2-t_0\}|,
\end{split}
\end{equation*}
for any $r_0$ and $t_0$.
\begin{thm}[{\cite[Conjecture 16]{behrend2023diagonally}}] The generating function for even-order OSASMs satisfies the following:
    \[
    X^{\mathrm{O}}_{2n}(r,t)=t^{2n+2} X^{\mathrm{O}}_{2n}(r,\bar{t}).
    \]
\end{thm}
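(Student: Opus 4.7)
The plan is to derive a functional equation for the even-order OSASM partition function $\widetilde{Z}_{2n}|_{s=0}$ under simultaneous inversion $u_k \to \bar u_k$ of all spectral parameters, and then translate it via \cref{ZX} to the generating function. The crucial simplification comes from \eqref{12}: since $\sigma(q^2u^2) = \sigma(qu)(qu+\bar q\bar u)$, at $s=0$ one has
\[
\widetilde{Z}_2(u_1,u_2)\big|_{s=0} = \frac{\sigma(qu_1)\sigma(qu_2)}{\sigma(q)^2},
\]
so that the Pfaffian entries in \cref{partfunc} (with $n = 2n$) become
\[
M_{ij}(u) = \frac{\sighhh(u_i\bar u_j)\,\sigma(qu_i)\sigma(qu_j)}{\sigma(q)^2\,\sighhh(q^2u_iu_j)\sighhh(q^2\bar u_i\bar u_j)}.
\]

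Applying $u_k \to \bar u_k$ to every $k$: the product $\sighhh(q^2u_iu_j)\sighhh(q^2\bar u_i\bar u_j)$ is invariant, while $\sighhh(\bar u_iu_j) = -\sighhh(u_i\bar u_j)$, so $M_{ij}(\bar u) = -k_ik_jM_{ij}(u)$ with $k_i = \sigma(q\bar u_i)/\sigma(qu_i)$. Combining \eqref{out} with the standard identity $\Pf(-A) = (-1)^n\Pf(A)$ for $2n \times 2n$ skew-symmetric $A$ (itself derivable from \eqref{out} by setting all $k_i = \iota$), one obtains $\Pf(M(\bar u)) = (-1)^n\prod_ik_i\cdot\Pf(M(u))$. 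The prefactor $\prod_{i<j}\sighhh(q^2u_iu_j)\sighhh(q^2\bar u_i\bar u_j)/\sighhh(u_i\bar u_j)$ acquires $(-1)^{\binom{2n}{2}} = (-1)^n$ from its denominator, cancelling the $(-1)^n$ in the Pfaffian and yielding the clean functional equation
\[
\widetilde{Z}_{2n}(\bar u_1,\dots,\bar u_{2n})\big|_{s=0} = \prod_{i=1}^{2n}\frac{\sigma(q\bar u_i)}{\sigma(qu_i)}\,\widetilde{Z}_{2n}(u_1,\dots,u_{2n})\big|_{s=0}.
\]

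Specializing to $u_1 = z$, $u_2 = \dots = u_{2n} = 1$ (so the $k \geq 2$ factors in the product contribute $\sigma(q)/\sigma(q) = 1$) yields $\widetilde{Z}_{2n}(z,1,\dots,1)|_{s=0}\,\sigma(q\bar z) = \widetilde{Z}_{2n}(\bar z,1,\dots,1)|_{s=0}\,\sigma(qz)$. Applying \cref{ZX} at $s=0$ with order $2n$, together with the identification $X_{2n}(r,0,t) = X^{\mathrm{O}}_{2n}(r,t)$ from \eqref{XOdef}, expresses both sides in terms of $X^{\mathrm{O}}_{2n}$; noting that $z \mapsto \bar z$ sends $t := \sigma(q^2z)/\sigma(q^2\bar z)$ to $\bar t$, a routine cancellation of the common $\sigma$-factors (the ratio of prefactors reduces to $\sigma(qz)\sigma(q^2\bar z)^{2n+2}/(\sigma(q\bar z)\sigma(q^2z)^{2n+2})$) collapses the identity precisely to
\[
X^{\mathrm{O}}_{2n}(r,t) = t^{2n+2}\,X^{\mathrm{O}}_{2n}(r,\bar t)
\]
with $r = q^2+\bar q^2$. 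Since $(r,t)$ sweeps a Zariski dense subset of $\mathbb{C}^2$ as $(q,z)$ vary, and both sides are Laurent polynomials, the polynomial identity follows. The most delicate point is the sign bookkeeping: the Pfaffian and the prefactor each acquire $(-1)^n$ under inversion, and the argument works only because $\binom{2n}{2}$ and $n$ have the same parity, making those two signs cancel.
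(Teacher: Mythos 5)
Your proof is correct, and while its endgame coincides with the paper's (both reduce the claim, via \cref{ZX} and the substitution $t=\sigma(q^2z)/\sigma(q^2\z)$, to the single ratio identity \eqref{aab}, i.e.\ $\widetilde{Z}_{2n}(z,1,\dots,1)|_{s=0}/\widetilde{Z}_{2n}(\z,1,\dots,1)|_{s=0}=\sigma(qz)/\sigma(q\z)$), the way you establish that ratio is genuinely different and arguably cleaner. The paper inverts only $u_1$ in the Pfaffian formula of \cref{partfunc}; since only the first row and column of the Pfaffian then change, and $A(\u_1,u_j)$ is not a scalar multiple of $A(u_1,u_j)$ for generic $u_j$, the paper is forced to send $u_2,\dots,u_{2n}\to 1$ inside the Pfaffian, where the formula degenerates to $0/0$, and to resolve the limit with an auxiliary matrix $B$ whose first row is constant in $j$. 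You instead invert all $2n$ variables at once, where every Pfaffian entry transforms by $-k_ik_j$ with $k_i=\sigma(q\u_i)/\sigma(qu_i)$, so \eqref{out} applies directly and yields the global inversion symmetry $\widetilde{Z}_{2n}(\u_1,\dots,\u_{2n})|_{s=0}=\prod_{i=1}^{2n}\bigl(\sigma(q\u_i)/\sigma(qu_i)\bigr)\widetilde{Z}_{2n}(u_1,\dots,u_{2n})|_{s=0}$; your sign bookkeeping (the $(-1)^n$ from $\Pf(-M)$ against the $(-1)^{\binom{2n}{2}}$ from the prefactor) is exactly right. One sentence you should add: the functional equation is derived from \cref{partfunc} only on the locus where the $u_i$ are generic, and the subsequent specialization $u_2=\dots=u_{2n}=1$ lands precisely in the degenerate locus of that formula; this is harmless because, after clearing the $\sigma(qu_i)$ denominators, the functional equation is an identity of Laurent polynomials and so specializes freely---but this is the very degeneracy the paper's limit argument exists to handle, so it deserves explicit mention. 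With that sentence added your argument is complete, and it buys a stronger intermediate statement (inversion symmetry in all spectral parameters) at no extra cost.
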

\begin{proof}
Observe that by \cref{ZX}, for any arbitrary $z$,
    \begin{equation*}
            \widetilde{Z}_{2n}(z,\underbrace{1,\dots,1}_{2n-1})\Big|_{s=0} =
            \frac{\sigma(q^2)^{(2n-1)(n-1)} \sigma(q^2 \z)^{2n} (q+\q) \sigma(qz)}{\sigma(q^4)^{(n)(2n-1)} \sigma(q^2 z)^2}
           X_{2n} \left(q^2+\q^2,0,\frac{\sigma(q^2 z)}{\sigma(q^2 \z)}\right).
    \end{equation*}
Suppose $z^2=\frac{tq^4+1}{t+q^{4}}$. Then    
 \[
 \frac{\sigma(q^2 z)}{\sigma(q^2 \z)}= \frac{q^2 z-\q^2 \z}{q^2 \z - \q^2 z} = \frac{q^4 z^2 -1}{q^4  - z^2} = t 
 \]
and 
 \[
  \frac{\widetilde{Z}_{2n}(z,\underbrace{1,\dots,1}_{2n-1})\Big|_{s=0}}{\widetilde{Z}_{2n}(\z,\underbrace{1,\dots,1}_{2n-1})\Big|_{s=0}} 
           = \frac{1}{t^{2n+2}} \times \frac{\sigma(qz)}
            {\sigma(q \z)}
             \times 
           \frac{X_{2n} \left(q^2+\q^2,0,t\right)}{X_{2n} \left(q^2+\q^2,0,\t\right)}.
 \]
 Since $q$ is an arbitrary constant, it is sufficient to show 
 \begin{equation}
 \label{aab}
  \frac{\widetilde{Z}_{2n}(z,\underbrace{1,\dots,1}_{2n-1})\Big|_{s=0}}{\widetilde{Z}_{2n}(\z,\underbrace{1,\dots,1}_{2n-1})\Big|_{s=0}}  =
 \frac{\sigma(qz)}{\sigma(q \z)}
 % = \frac{qz-\q \z}{q \z-\q z}
 \end{equation}
 to complete the proof. By \cref{partfunc}, we have that
 \begin{equation}
 \label{ak}
        \widetilde{Z}_{2n}(u_1,u_2,\dots,u_{2n})=\prod_{1 \leq i<j \leq 2n} 
        \frac{\sighhh(q^2 u_i u_j) \sighhh(q^2 \u_i \u_j)}{\sighhh(u_i \u_j)}
        \Pf_{1 \leq i < j \leq 2n} 
        \left(
        \frac{\sighhh(u_i \u_j) \widetilde{Z}_2(u_i,u_j)}{\sighhh(q^2 u_i u_j) \sighhh(q^2 \u_i \u_j)}
        \right)
    \end{equation} 
    and using \eqref{12}, we note that
    \[
     \widetilde{Z}_2(u_i,u_j)\big|_{s=0}= \frac{\sigma(q u_i) \sigma(q u_j)}{\sigma(q)^2}. 
    \]
Substituting the value of $\widetilde{Z}_2(u_i,u_j)\big|_{s=0}$ in \eqref{ak}, we get that
     \begin{equation*}
     \begin{split}
        \widetilde{Z}_{2n}(u_1,u_2,\dots,u_{2n})\big|_{s=0}=&\prod_{1 \leq i<j \leq 2n} 
        \frac{\sighhh(q^2 u_i u_j) \sighhh(q^2 \u_i \u_j)}{\sighhh(u_i \u_j)}
        \Pf_{1 \leq i < j \leq 2n} 
        \left(
        \frac{\sighhh(u_i \u_j) \sigma(q u_i) \sigma(q u_j)}{\sighhh(q^2 u_i u_j) \sighhh(q^2 \u_i \u_j) \sigma(q)^2}
        \right)
        \\
        =&\prod_{1 \leq i<j \leq 2n} 
        \frac{\sighhh(q^2 u_i u_j) \sighhh(q^2 \u_i \u_j)}{\sighhh(u_i \u_j)} \prod_{i=1}^{2n} 
        \frac{\sigma(q u_i)}{\sigma(q)}
        \Pf_{1 \leq i < j \leq 2n} 
        \left(
        \frac{\sighhh(u_i \u_j)}{\sighhh(q^2 u_i u_j) \sighhh(q^2 \u_i \u_j)}
        \right),
        \end{split}
\end{equation*} 
where we use \eqref{out} to get the second equality. Therefore,
     \begin{equation}
     \label{aam}
    \frac{\widetilde{Z}_{2n}(u_1,u_2,\dots,u_{2n})\Big|_{s=0}}{\widetilde{Z}_{2n}(\u_1,u_2,\dots,u_{2n})\Big|_{s=0}}
    =  \prod_{j=2}^{2n} \frac{A(\u_1,u_j)}{A(u_1,u_j)} \times \frac{\sigma(q u_1)}{\sigma(q \u_1)} \times \frac{\Pf_{1 \leq i < j \leq 2n} 
        \left(
        A(u_i,u_j)
        \right)}{
        \Pf_{1 \leq i < j \leq 2n} 
        \left(
        A(u_i,u_j)
        \right)\big|_{u_1=\u_1}
        },
    \end{equation}
 where \[A(u_i,u_j)=\frac{\sighhh(u_i \u_j)}{\sighhh(q^2 u_i u_j) \sighhh(q^2 \u_i \u_j)}.\]
Substituting $u_1=z$ and taking limits ${u_2, \dots, u_{2n} \to 1}$ in \eqref{aam}, we have that
    \begin{equation}
    \label{aac}
     \frac{\widetilde{Z}_{2n}(z,\underbrace{1,\dots,1}_{2n-1})\Big|_{s=0}}{\widetilde{Z}_{2n}(\z,\underbrace{1,\dots,1}_{2n-1})\Big|_{s=0}}
    = - \frac{\sigma(q z)}{\sigma(q \bar{z})} 
    \times \frac{\Pf_{1 \leq i < j \leq 2n} 
        \left(
        A(u_i,u_j)
        \right)\big|_{u_1=z,u_2,\dots,u_{2n} \to 1}}{
        \Pf_{1 \leq i < j \leq 2n} 
        \left(
        A(u_i,u_j)
        \right)\big|_{u_1=\bar{z},u_2,\dots,u_{2n} \to 1}},
    \end{equation}
where we use the fact that $A(z,1)=-A(\bar{z},1)$. Observe that 
\[
\Pf_{1 \leq i < j \leq 2n} 
        \left(
        A(u_i,u_j)
        \right)\Big|_{u_1=z, u_2=\dots=u_{2n}=1}=
        \Pf_{1 \leq i < j \leq 2n} 
        \left(
        A(u_i,u_j)
        \right)\Big|_{u_1=\bar{z}, u_2=\dots=u_{2n}=1} =0.
\]
 Now consider a skew-symmetric matrix $B=\left(B_{i,j}\right)_{1 \leq i,j \leq 2n}$ such that for $1 \leq i<j \leq 2n$, 
  \[
  B_{i,j}=\begin{cases}
    A(u_1,1) & i=1\\
  A({u_i,u_j})  & i \geq 2.
  \end{cases}
  \]
Here $\Pf(B) \neq 0$. Since  $A(z,1)=-A(\bar{z},1)$, $\left(\Pf(B)\right)\big|_{u_1=z}=-\left(\Pf(B)\right)\big|_{u_1=\bar{z}}$. Thus, 
  \[
  \frac{\Pf_{1 \leq i < j \leq 2n} 
        \left(
        A(u_i,u_j)
        \right)\big|_{u_1=z,u_2,\dots,u_{2n} \to 1}}{
        \Pf_{1 \leq i < j \leq 2n} 
        \left(
        A(u_i,u_j)
        \right)\big|_{u_1=\bar{z},u_2,\dots,u_{2n} \to 1}}=
        \left(\frac{\Pf(B)\big|_{u_1=z}}{\Pf(B)\big|_{u_1=\bar{z}}}\right)\Bigg|_{u_2,\dots,u_{2n} \to 1} = -1. 
  \]
Plugging this into \eqref{aac} completes the proof in view of \eqref{aab}.
\end{proof}

\subsection*{Acknowledgement} The author acknowledges support from the Austrian Science Fund (FWF) grants 
\href{https://dx.doi.org/10.55776/F1002}{10.55776/F1002}.
The author would also like to thank I. Fischer for all the insightful discussions.

\bibliography{Bibliography}
\bibliographystyle{alpha}
\end{document}